\newcommand*{\N}{\mathbb{N}}
\newcommand*{\R}{\mathbb{R}}
\newcommand*{\C}{\mathbb{C}}
\newcommand*{\E}{\mathbb{E}}
\newcommand{\iu}{\mathrm{i}} 
\newcommand*{\A}{A}
\newcommand*{\B}{B}
\DeclareMathOperator{\trace}{Tr}
\DeclareMathOperator{\Real}{Re}
\DeclareMathOperator{\sgn}{sgn}
\newcommand*\xbar[1]{%
  \,\hbox{%
    \vbox{%
      \hrule height 0.1pt 
      \kern0.4ex
      \hbox{%
        \kern-0.1em
        \ensuremath{#1}%
        \kern-0.1em
      }%
    }%
  }\,%
} 
\providecommand{\keywords}[1]
{
  \small	
   \textbf{\textit{Keywords:}} #1
}
\providecommand{\MSC}[1]
{
  \small	
  \textbf{\textit{2000 MSC:}} #1
}
\theoremstyle{plain}
\newtheorem{theo}{Theorem}[section]
\newtheorem{lemma}[theo]{Lemma}
\newtheorem{propo}[theo]{Proposition}
\newtheorem{setting}[theo]{Setting}
\newtheorem{remark}{Remark}
\theoremstyle{definition}
\theoremstyle{remark}
\newtheorem*{note}{Note}
\title{The Collective Dynamics of a Stochastic Port-Hamiltonian Self-Driven Agent Model in One Dimension}
\author{Matthias Ehrhardt and Thomas Kruse\\
Applied and Computational Mathematics\\
University of Wuppertal, Germany\\
  \and
Antoine Tordeux\\
	Traffic Safety and Reliability\\University of Wuppertal, Germany
	}
\date{\today}
\begin{document}

\maketitle

\begin{abstract}
The collective motion of self-driven agents is a phenomenon of great interest in interacting particle systems. 
In this paper, we develop and analyze a model of agent motion in one dimension with periodic boundaries using a stochastic port-Hamiltonian system (PHS). 
The interaction model is symmetric and based on nearest neighbors. 
The distance-based terms and kinematic relations correspond to the skew-symmetric Hamiltonian structure of the PHS, 
while the velocity difference terms make the system dissipative. 
The originality of the approaches lies in the stochastic noise that plays the role of an external input.
It turns out that the stochastic PHS with a quadratic potential is an Ornstein-Uhlenbeck process, for which we explicitly determine the distribution for any time $t\ge 0$ and in the limit $t\to\infty$.

We characterize the collective motion by showing that the agents' mean velocity is Brownian noise whose fluctuations increase with time, 
while the variance of the agents' velocities and distances, which quantify the coordination of the agents' motion, converge. 
The motion model does not specify a preferred direction of motion. 
However, assuming a equilibrium uniform starting configuration, the results show that the noise triggers rapidly coordinated agent motion determined by the Brownian behavior of the mean velocity. 
Interestingly, simulation results show that some theoretical properties obtained with the Ornstein-Uhlenbeck process also hold for the nonlinear model with general interaction potential. 
\end{abstract}

\keywords{Collective motion, stochastic port-Hamiltonian system, self-driven agent, Ornstein-Uhlenbeck process, long-time behavior}

\MSC{76A30, 
     82C22, 
     60H10, 
     37H30  
}



\section{Introduction}

Collective motion of self-driven agents is the spontaneous formation of coordinated behavior in a common direction of motion. 
Collective motion and swarming behavior can be observed in various systems of living organisms (schools of fish, flocks of birds, herds of animals, colonies of bacteria), in crowds and road traffic, or non-living active systems, such as microswimmers and other self-driven particle systems (see, e.g., the reviews \cite{vicsek2012collective,marchetti2013hydrodynamics,shaebani2020computational}). 
The large variety of systems in which collective motion can be observed is fascinating. 
It suggests that the phenomenon is universal. 
It is then interesting to formulate parsimonious motion models that reproduce the spontaneous coordination of the dynamics. 
This field of research is largely developed in statistical physics and the physics of active matter \cite{acebron2005kuramoto,ballerini2008interaction,Gautrais2012,ramaswamy2017active,shaebani2020computational,keta2022disordered}. 

The collective motion of a multi-agent system is generally characterized using the ensemble mean velocity of the agents as an order parameter \cite{vicsek2012collective}.
One of the most famous microscopic models describing collective motion is the Vicsek model introduced by Tam{\`a}s Vicsek et al.\ in the 1990s \cite{vicsek1995novel} and its extensions \cite{chate2008modeling,degond2014hydrodynamics,moreno2020collective}.
This model shows a phase transition from a disordered state to large-scale ordered motion in one \cite{czirok1999collective} and two dimensions \cite{vicsek1995novel}. 
Nowadays, many studies report on phase transitions to collective motion using multi-agent and self-driven particle systems, including various topological and metric interaction fields and noise models \cite{barberis2019phase,nemoto2019optimizing,grossmann2020particle,martin2021fluctuation,de2022collective}.   
%
In traffic engineering, the study of long-term dynamics, the so-called collective behavior, and its control is a current field of research. 
Initial work from the 1950s \cite{chandler1958traffic, gazis1961nonlinear,herman1959traffic,pipes1953operational} showed that nonlinear follow-the-leader models can exhibit severe stability problems.
However, this topic has currently returned to the focus of public interest in connection with automated driving.
For example, driving assistance systems with \textit{adaptive cruise control} (ACC) can lead to unstable group dynamics of vehicles \cite{ciuffo2021requiem,gunter2020commercially, makridis2021openacc,stern2018dissipation}.
 Thus, it is of great interest to make future driving assistance systems both efficient and robust to disturbances using special stabilization techniques based mostly on anticipation approaches \cite{khound2023extending,treiber2006delays,wang2019effect}. 

In the 1980s, pioneering work by Arjan van der Schaft introduced \textit{port-Hamiltonian systems} (PHS) for modeling nonlinear physical systems \cite{van2007port, van2014port}.
In contrast to the usual Hamiltonian systems, PHS, in addition to describing the Hamiltonian dynamics through the input/output ports, 
also allow control and external factors to be taken into account and enable direct calculation of the system output. 
Similarly, systems from different physical domains (so-called multi-physics systems) can be formulated as PHS and the proper coupling of PHS systems again yields a PHS system \cite{rashad2020twenty}.
It is this functional structure of the PHS, which mitigates the modeling between conserved quantities, dissipation, input, and output, that is an extremely useful representation of many systems.

There are several ways to impart randomness to PHS.
Several works in the literature use explicit, finite-dimensional input-state-output PHSs and study the impacts of white noise perturbations resulting in the state dynamics 
\begin{equation*}
  dX(t)=\Bigl[\bigl( J(X(t))-R(X(t)) \bigr)\nabla H(X(t))+g(X(t))\,u(t)\Bigr]\,dt + \sigma(X(t))\,dW(t)
\end{equation*}
and output $y(t)=g^\top(X(t))\nabla H(X(t))$. 
This approach was put forward in \cite{satoh2012passivity} and was subsequently extended in, e.g., 
\cite{cordoni2021stabilization, fang2017stabilization, lamoline2017stochastic, satoh2017input}. 
A key observation for this approach is that,
although the infinitesimal influence of noise on the state is in expectation zero, 
it nevertheless increases the mean energy of the system at the rate $\operatorname{tr}(\sigma^\top (\operatorname{Hess}H) \sigma )/2$
and therefore these stochastic expansions are no longer inherently passive.
First proposals for incorporating randomness into PHS through the implicit formalism of Dirac structures were recently made in \cite{cordoni2019stochastic} and \cite{lamoline2022dirac}.

In this paper, we propose a stochastic motion model of agents that can be formulated using a stochastic port Hamiltonian system. More precisely, we consider the motion of $N\in \N$ agents on a ring of length $L>0$. In our model the infinitesimal change of velocity of agent $n\in\{1,\ldots,N\}$ depends linearly on the velocity difference to the two direct neighbors. In particular, agent $n$ accelerates if her speed is smaller than the mean speed of the two direct neighbors and slows down otherwise. Moreover, her infinitesimal change of velocity depends on the distance to the two direct neighbors. We introduce a convex potential $U\colon \R \to [0,\infty)$ that dictates how agent $n$ reacts to the distances. The bigger the distance to the agent in front and the smaller the distance to the follower, the higher the acceleration and vice versa. In particular, the agent dynamics are symmetric, with no preferred direction of motion. This deterministic law of motion is perturbed by agent-individual white noise processes.

We show that our model suits into the stochastic pH framework outlined above and analyze its Hamiltonian behavior. In the case of a quadratic potential we perform an explicit analysis on the distributional properties of the system. In particular, we characterize its long-time limit in closed form using an eigendecomposition of the system matrix and results on so-called Dowker's sums. We find that the ensemble mean velocity diverges while the ensemble variances of the agent velocities and distances converge to a steady-state distributions - thereby establishing collective motion.  
In contrast to classical approaches, this collective motion is purely noise-induced and does not result from a phase transition. 
More precisely, the motion model being linear in the case of a quadratic potential, it allows the collective motion to be analyzed explicitly. 
Interestingly, the simulation experiments show that some of the theoretical results seem to remain valid for general potentials and nonlinear interaction terms.

The paper is organized as follows.
In Section~\ref{sec:model} we introduce our agent motion model under consideration jointly with its formulation as a port-Hamiltonian system.
In Section~\ref{sec:OUprocess}
we consider the case of a quadratic potential for which the system is an Ornstein-Uhlenbeck process.
Finally, we present in Section~\ref{sec:simulation} some illustrative simulation results that support our theoretical findings.

\paragraph{General notations:}
Throughout this article we use the following notations: For $N\in \N$ the vector
$\mathbf{1}=(1,1,\dots,1)^\top\in \R^N$ denotes the vector consisting of ones and $I\in \R^{N\times N}$ denotes the $N$-dimensional identity matrix. 
We denote by $\iu\in \C$ the imaginary unit. 
For $a,b\in \R$ we denote by 
$\xbar{a+\iu b}=a-\iu b$ 
the complex conjugate and by $\Real(a+\iu b)=a$ 
the real part of the complex number $a+\iu b\in \C$. 
For a matrix $A\in \C^{N\times N}$ we denote by $A^*=\bar{A}^\top$ its complex conjugate.

\section{Definition of the Model and its Hamiltonian Behavior}\label{sec:model}
Let us first start with defining the model under consideration.

\subsection{Notations}
We consider $N\in \{3,4,\dots,\}$ agents on a segment of length $L$ with periodic boundaries.
We denote 
\begin{equation*}
    q(t)=\bigl(q_n(t)\bigr)_{n=1}^N\in\R^N,\qquad t\in [0,\infty),
\end{equation*}
and 
\begin{equation*}
    p(t)=\bigl(p_n(t)\bigr)_{n=1}^N\in\R^N,\qquad t\in [0,\infty),
\end{equation*}
the positions and velocities of the agents at time $t$, respectively.
We assume that the positions $q(t)$ and the velocities $p(t)$ of the $N$ agents at time $t=0$ are known, 
\begin{equation*}
    p(0)=p_0\in\R^N,\qquad 
    q(0)=q_0=(q_n^0)_{n=1}^N\in\R^N,
\end{equation*}
and that the positions of the agents are initially ordered by their indices, i.e.,
\begin{equation}\label{eq:ini_order}
     0\le q_1^0\le q_2^0\le\ldots\le q_N^0\le L.
\end{equation}
\begin{note}
We systematically use in the following the index $n+1$ for the nearest neighbor on the right and $n-1$ for the nearest neighbor on the left. 
Note that the right neighbor of the $N$th agent is the first agent, 
i.e., $n+1=1$ if $n=N$, and conversely, the left neighbor of the 1st agent is the $N$th agent, i.e., $n-1=N$ if $n=1$.
\end{note}

The distances of the agents to their immediate right neighbors
\begin{equation*}
    Q(t)=(Q_n(t))_{n=1}^N\in\R^N,\qquad t\in [0,\infty),
\end{equation*}
   are given by
\begin{equation}\label{eq:def_dist}
\Bigg\{~\begin{aligned}
    &Q_n(t)=q_{n+1}(t)-q_n(t),&& n\in\{1,\dots,N-1\},\\
    &Q_N(t)=L+q_1(t)-q_N(t).
\end{aligned}
\end{equation}
The distance to the left is $Q_N$ for the first agent and $Q_{n-1}$ for the $n$th agent, $n\in\{2,\dots,N\}$. The index order of the agents at time zero makes the initial distance positive 
\begin{equation*}
    Q(0)=Q_0\in[0,\infty)^N.    
\end{equation*}

\subsection{Agent Motion Model}
To formulate our stochastic motion model, we introduce a probability space $(\Omega, \mathcal{F}, \mathbb{P})$ 
on which there exists an $N$-dimensional standard Brownian motion 
\begin{equation*}
    W=\bigl(W_n\bigr)_{n=1}^N\colon [0,\infty) \times \Omega \to \R^N.
\end{equation*}
The motion model reads for the $n$-th agent at time $ t\in [0,\infty)$
\begin{equation}\label{eq:modn}
    \begin{cases}
        dQ_n(t)&=\bigl(p_{n+1}(t)-p_n(t)\bigr)\,dt,\qquad\qquad\qquad\qquad\qquad\qquad\qquad Q(0)=Q_0,\\
        dp_n(t)&=\bigl(U'(Q_n(t))-U'(Q_{n-1}(t))\bigr)\,dt\\
        &\qquad+\beta\bigl(p_{n+1}(t)-2p_n(t)+p_{n-1}(t)\bigr)\,dt+\sigma \,dW_n(t),  \qquad p(0)=p_0, 
    \end{cases}
\end{equation}
with $\beta\in (0,\infty)$ a dissipation rate, $\sigma\in\R$ the noise volatility and $U'$ the derivative of a convex potential $U\in C^1(\R,[0,\infty))$.  
Stronger regularity assumptions may be necessary to guarantee the existence of the solution.
A common choice is the quadratic functional $U(x)=(\alpha x)^2/2$, $x\in \R$, for some $\alpha \in (0,\infty)$. 

\begin{remark}
    Equation \eqref{eq:modn} describes the motion model in a Hamiltonian fashion. 
    An asymmetric model with the same characteristics has recently been introduced for modeling stop-and-go waves in traffic flow \cite{ruediger2022stability}.
    The first line in \eqref{eq:modn} is simply the differential version of \eqref{eq:def_dist}. 
    The second line in \eqref{eq:modn} models the stochastic law of motion of the $N$ agents. 
    The acceleration $dp_n$ of agent $n$ depends on the velocities of her direct neighbors and the distances to her direct neighbors. 
    Let us first consider the dependence on the velocities. 
    The acceleration $dp_n$ of agent $n$ increases linearly in the difference $p_{n+1}-p_n$ between her right neighbor's velocity and her own velocity and decreases linearly in the difference $p_{n}-p_{n-1}$ between her own velocity and her left neighbor's velocity (both with slope $\beta> 0$). 
    In particular, if her velocity coincides with the mean velocity of her two neighbors (i.e., $p_n(t)=(p_{n+1}(t)-p_n(t))/2)$, then the velocity term does not contribute to the acceleration of agent $n$. 
    Concerning the dependence on the distance to the direct neighbors, we remark that the acceleration increases the higher the distance $Q_n$ to the right neighbor and decreases the higher the distance $Q_{n-1}$ to the left neighbor. 
    The precise dependence on the distances is dictated by the derivative of the convex potential $U$. 
    If agent $n$ is located exactly in the middle between its neighbors (i.e., $Q_n(t)=Q_{n-1}(t)$), then the distance term does not contribute to the acceleration of agent $n$. 
    From the description so far, we see that this deterministic system is in equilibrium if all agents move at the same speed and have equidistant positions. 
    The last term in the second line of \eqref{eq:modn}, however, introduces a stochastic perturbation by white noise which brings the system out of equilibrium.  
\end{remark}

\begin{remark}
Note that the motion model is symmetric: the interaction model with neighbors is identical whether the agent moves to the right (velocity positive) or to the left (velocity negative). 
There is no preferred direction of motion. 
\end{remark}

\begin{remark}\label{rem:average_distance}
The velocity dynamics depends only on the relative positions (distance) and the relative velocities to the two nearest neighbors. 
It is therefore more convenient to represent the system by the distance and velocity variables $(Q,p)$ instead of $(q,p)$. 
Note that in the $(Q,p)$ representation, the relation
\begin{equation*}
   \sum_{n=1}^N Q_n(t)=L,
\end{equation*}
holds at any time $t\in[0,\infty)$ because of the periodic boundaries. 
In particular, for any $n\in \{1,\dots,N\}$ the distance $Q_n$ can be derived from 
the other distances $Q_l$, $l\in \{1,\dots,N\}\setminus\{n\}$, 
and thus the linear equations describing the dynamics of $Q$ in \eqref{eq:modn} are linearly dependent.
\end{remark}

\begin{remark}\label{rem:average_velocity}
The ensemble's mean velocity
\begin{equation}\label{eq:average_velocity}
     \xbar{p}(t)
    =\frac{1}{N}\sum_{n=1}^N p_n(t), \qquad t\in [0,\infty),
\end{equation}
is a Brownian motion with variance $\sigma^2/N$ for any potential function $U$. 
In fact, thanks to the telescopic form of the model~\eqref{eq:modn} and the periodic boundaries, we have
\begin{equation*}\begin{aligned}
   d\xbar{p}(t)=\frac{1}{N}\sum_{n=1}^N dp_n(t)
   =\frac{\sigma}N\sum_{n=1}^N dW_n(t), \qquad t\in [0,\infty).
\end{aligned}
\end{equation*}
\end{remark}

\begin{remark}
We note that within our model it is possible that the initial ordering \eqref{eq:ini_order} is not preserved at all times and that collisions happen, i.e., for each $t\ge 0$ there exists with positive probability $n\in \{1,\ldots,N\}$ such that $Q_n(t)\le 0$.
This is due to the fact that each agents' velocity is driven by an individual Brownian motion. 
Incorporating non-collusion measures makes the system analysis more challenging and constitutes an interesting question for future research. 
We remark that the probability of collisions and overtaking shrinks as $\alpha$ and $\beta$ increase and as $\sigma$ decreases.
Our explicit analysis in Section~\ref{sec:OUprocess} in case of a quadratic potential allows for a quantification of these probabilities in the steady state distribution.
\end{remark}

\subsection{Port-Hamiltonian Formulation}
Next, we rewrite the system \eqref{eq:modn} in matrix form and identify a port-Hamiltonian structure.
\begin{propo}\label{prop:PHSform}
Denoting $Z(t)=(Q(t),p(t))^\top\in\R^{2N}$, $t\in[0,\infty)$, 
the dynamics of the periodic system \eqref{eq:modn} are given by
\begin{equation}\label{eq:PHS}
    \begin{aligned}
        dZ(t)=(J-R)\nabla H(Z(t))\,dt+G\, dW(t),&&&& Z(0)=z_0=(Q_0,p_0)^\top,
    \end{aligned}
\end{equation}
with 
\begin{equation*}
  J=\begin{bmatrix}0&\A\\-\A^\top &0 \end{bmatrix}\in \R^{2N\times 2N},\quad
  R=\begin{bmatrix}0&0\\0 &\beta \A^\top \A \end{bmatrix}\in \R^{2N\times 2N},\quad
  G=\begin{bmatrix}0\\\sigma I\end{bmatrix}\in \R^{2N\times N},
\end{equation*}
\begin{equation}\label{eq:def_A}
  \A=\begin{bmatrix}-1&1& & &\\
  &-1&1& &\\
  &&\ddots&\ddots&\\[1mm]
  &&&-1&1\\
  1&&&&-1\end{bmatrix}\in \R^{N\times N},
\end{equation}
and the Hamiltonian operator $H\colon \R^{2N} \to \R$,
\begin{equation}\label{eq:def_Hamiltonian}
    H(Q,p)=\frac{1}{2} \|p\|^2+\sum_{n=1}^N U(Q_n), \quad p\in \R^N, Q=(Q_1, \ldots, Q_n)^\top \in \R^N.
\end{equation}
Moreover, the matrix $J$ is skew-symmetric by $N\times N$ block while $R$ is symmetric positive semi-definite.
\end{propo}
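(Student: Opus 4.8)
The statement is a verification: we must recast \eqref{eq:modn} in vector form, recognise the difference operators appearing there as $A$ and $A^\top$, and then read off the block structure. The plan is as follows. First I would record the action of the circulant matrix $A$ from \eqref{eq:def_A} on a vector $v\in\R^N$. Using the cyclic index convention of the Note (with $v_{N+1}:=v_1$ and $v_0:=v_N$), one has
\[
  (Av)_n = v_{n+1}-v_n,\qquad (A^\top v)_n = v_{n-1}-v_n,\qquad n\in\{1,\dots,N\}.
\]
From these two identities the three building blocks of \eqref{eq:modn} follow immediately: (i) $(-A^\top v)_n = v_n - v_{n-1}$, which, applied to $v=\nabla_Q H(Q,p)=(U'(Q_1),\dots,U'(Q_N))^\top$, reproduces the telescopic distance force $U'(Q_n)-U'(Q_{n-1})$; (ii) $(A^\top A v)_n = -(v_{n+1}-2v_n+v_{n-1})$, i.e. $A^\top A$ is minus the discrete periodic Laplacian, so that $(-\beta A^\top A\, p)_n = \beta(p_{n+1}-2p_n+p_{n-1})$ recovers the dissipative velocity coupling; and (iii) $(A p)_n = p_{n+1}-p_n$, which is precisely the drift in the $dQ_n$-line.

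Next, using $\nabla H(Q,p) = (\nabla_Q H,\nabla_p H)^\top = \bigl((U'(Q_n))_{n=1}^N,\; p\bigr)^\top$ from \eqref{eq:def_Hamiltonian}, I would compute the block product
\[
  (J-R)\,\nabla H(Z)
  = \begin{bmatrix} 0 & A\\ -A^\top & -\beta A^\top A\end{bmatrix}
    \begin{bmatrix} \nabla_Q H\\ \nabla_p H\end{bmatrix}
  = \begin{bmatrix} A p\\ -A^\top (U'(Q_n))_{n=1}^N - \beta A^\top A\, p\end{bmatrix}.
\]
By identities (i)–(iii) the first block has $n$-th component $p_{n+1}-p_n$ and the second block has $n$-th component $U'(Q_n)-U'(Q_{n-1}) + \beta(p_{n+1}-2p_n+p_{n-1})$. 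Since $G\,dW(t) = (0,\sigma\,dW(t))^\top$ and $Z(0)=(Q_0,p_0)^\top$, comparing componentwise with \eqref{eq:modn} yields exactly \eqref{eq:PHS}.

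Finally, for the structural claims: $J^\top = \begin{bmatrix} 0 & -A\\ A^\top & 0\end{bmatrix} = -J$, so $J$ is skew-symmetric as a $2\times 2$ block matrix (even though the diagonal block $A$ itself is not skew-symmetric); and $R$ is block-diagonal with single nonzero block $\beta A^\top A$, which is symmetric since $(A^\top A)^\top = A^\top A$, and positive semi-definite since $x^\top(\beta A^\top A)x = \beta\|Ax\|^2 \ge 0$ for every $x\in\R^N$ because $\beta>0$. Hence $R$ is symmetric positive semi-definite, completing the proof.

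I do not expect any genuine obstacle here; the only point requiring care is the bookkeeping of the wrap-around terms at $n=1$ and $n=N$, i.e. checking that the off-corner entries of $A$ in \eqref{eq:def_A} are placed so that $A$, $A^\top$ and $A^\top A$ are truly circulant, and keeping the signs in $\nabla_Q H$ consistent throughout.
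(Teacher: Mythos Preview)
Your proposal is correct and follows essentially the same route as the paper: compute $\nabla H$, rewrite \eqref{eq:modn} as $dQ=Ap\,dt$ and $dp=(-A^\top U'(Q)-\beta A^\top A\,p)\,dt+\sigma\,dW$, identify the block matrix $J-R$, and verify skew-symmetry of $J$ and positive semi-definiteness of $R$ via $\beta\|Ax\|^2\ge 0$. The only cosmetic difference is that you spell out the componentwise action of $A$, $A^\top$ and $A^\top A$ on a generic vector, whereas the paper displays $A^\top A$ as an explicit tridiagonal circulant matrix; both amount to the same verification.
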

\begin{proof}
First note that for all $(Q,p)^\top \in \R^{2N}$ we have
\begin{equation*}
 (\nabla H)(Q,p)=\begin{bmatrix}U'(Q)\\p\end{bmatrix},\qquad\text{with}\quad U'(Q)=\bigl(U'(Q_n)\bigr)_{n=1}^N.
\end{equation*}
Moreover, we have
\begin{equation*}
   \A^\top \A=\begin{bmatrix}2&-1&&&-1\\
  -1&2&-1&&\\
   &&\ddots&&\\[1mm]
   &&-1&2&-1\\
   -1&&&-1&2\end{bmatrix}\in\R^{N\times N}.
\end{equation*}
It follows directly from the model \eqref{eq:modn} that
\begin{equation}
    \begin{cases}
        dQ(t)&=\A p(t)\,dt,\\
        dp(t)&=\bigl(-\A^\top U'(Q(t))-\beta \A^\top \A p(t)\bigr)\,dt + \sigma\,dW(t)
    \end{cases}
\end{equation}
and hence
\begin{equation*}
    \begin{aligned}
        dZ(t)=\tilde B\nabla H(Z(t))\,dt + G\,dW(t)
    \end{aligned}
\end{equation*}
with
\begin{equation*}
\tilde B:=\begin{bmatrix}0&\A\\-\A^\top &-\beta \A^\top \A \end{bmatrix}=J-R.
\end{equation*}
Clearly, $J$ is skew-symmetric and $R$ is symmetric. Moreover, it holds for all $(Q,p)^\top \in \R^{2N}$ that 
\begin{equation*}
(Q^\top, p^{\top})R\begin{pmatrix}
    Q\\ p
\end{pmatrix}
=\beta p^{\top}\A^\top \A p=\beta \|\A p\|^2\in [0,\infty)
\end{equation*}
and hence $R$ is positive semidefinite. 
\end{proof}

\begin{remark}
In the port-Hamiltonian formulation of the model, both the kinematics between the distance and the relative velocity and the kinematics between the relative velocities and the distance potential terms are part of the Hamiltonian and the skew-symmetric matrix $J$ of the port-Hamiltonian system. 
These components represent the conservative part of the system. 

The velocity difference terms in the motion model correspond to the dissipation matrix $R$ in the port-Hamiltonian system, 
while the noise plays the role of an external input (some disturbances). 
The port-Hamiltonian system is linear in the sense that $J$, $R$ and $G$ are constant. 
The nonlinear components arise only from the distance-based potential $U$ and the Hamiltonian. 
\end{remark}

\subsection{Hamiltonian Behavior}
For $f\in C^2(\R^{2N},\R)$ and $Z(t)=(Q(t),p(t))^\top$, $t\in [0,\infty)$, given by \eqref{eq:PHS} It\^o's formula implies that
\begin{equation*}
    \begin{split}
        df(Z(t))&=\bigl(\nabla f(Z(t))\bigr)^\top 
        \bigl( (J-R)\nabla H(Z(t))\,dt + G\, dW(t)\bigr)
        +\frac{1}{2}\trace\bigl(G^\top (\nabla^2 f)(Z(t))G\bigr)\, dt \\
        &
        =\Bigl( \bigl(\nabla f(Z(t))\bigr)^\top (J-R)\nabla H(Z(t))
        +\frac{1}{2}\trace\bigl(G^\top (\nabla^2 f)(Z(t))G\bigr)\Bigr)\,dt 
        +\bigl(\nabla f(Z(t))\bigr)^\top G\, dW(t).
    \end{split}
\end{equation*}
Put differently, the generator $\mathcal L$ of $(Z(t))_{t\in [0,\infty)}$ satisfies for all sufficiently regular $f\colon \R^{2N}\to \R$ and $zc\in \R^{2N}$ that
\begin{equation*}
    (\mathcal{L}f)(z)=\bigl(\nabla f(z)\bigr)^{\top}(J-R)\nabla H(z) 
    +\frac{1}{2}\trace\bigl(G^\top (\nabla^2 f)(z)G\bigr).
\end{equation*}
Using the skew-symmetry of $J$ this implies for the Hamiltonian $H$ (given by \eqref{eq:def_Hamiltonian}) that
\begin{equation}\label{eq:dynHam}
    \begin{split}
        dH(Z(t))&
        =\Bigl( \bigl(\nabla H(Z(t))\bigr)^\top (J-R)\nabla H(Z(t)) +\frac{1}{2}\trace\bigl(G^\top (\nabla^2 H)(Z(t))G\bigr)\Bigr)\,dt \\
        &\qquad +\bigl(\nabla H(Z(t))\bigr)^\top G\, dW(t)\\
        &=\Bigl( -\bigl(\nabla H(Z(t))\bigr)^\top R\nabla H(Z(t)) +\frac{N\sigma^2 }{2}\Bigr)\,dt +\sigma p^\top (t) \,dW(t)\\
         &=\Bigl( -\beta \|\A p(t)\|^2+\frac{N\sigma^2 }{2}\Bigr)\,dt +\sigma p^{\top}(t) \, dW(t).\\
    \end{split}
\end{equation}
Note that the Hamiltonian behavior in time does not depend explicitly on the distance $Q$ and the potential $U$ thanks to the skew symmetry. 
Further remarks on the Hamiltonian behavior can be found below.
\begin{remark}\label{rem:Stab_det}
    The deterministic system ($\sigma=0$) is \textit{stable}, 
    i.e., the Hamiltonian is non-increasing over time. 
    Indeed, in the case $\sigma=0$ equation \eqref{eq:dynHam} reads for all $t\in [0,\infty)$
\begin{equation}\label{eq:dynHam_det}
    d H\bigl(Z(t)\bigr)=- \beta \|\A p(t)\|^2\, dt.
\end{equation}
Recall that according to Remark~\ref{rem:average_distance} and Remark~\ref{rem:average_velocity} the deterministic system \eqref{eq:modn} is always in a state where the ensemble's mean distance and the ensemble's mean velocity satisfy 
\begin{equation*}
  \frac{1}{N}\sum_{n=1}^N Q_n(t)=\frac{L}{N},\qquad
  \frac{1}{N}\sum_{n=1}^N p_n(t)=\frac{1}{N}\sum_{n=1}^N p_n(0).
\end{equation*}
By Jensen's inequality (using the convexity of $U$) it follows that the unique minimum of $H$ over all $(Q,p)^\top\in \R^{2N}$ with $\frac{1}{N}\sum_{n=1}^NQ_n=\frac{L}{N}$
and $\frac{1}{N}\sum_{n=1}^N p_n=\frac{1}{N}\sum_{n=1}^N p_n(0)$ 
is given by the uniform configuration $(Q^*,p^*)^\top\in\R^{2N}$ with $Q^*_n=\frac{L}{N}$
and $p^*_n=\frac{1}{N}\sum_{n=1}^Np_n(0)$ for all $n\in \{1,\ldots,N\}$.
Note that $(Q^*,p^*)^\top\in \R^{2N}$ is an equilibrium point of \eqref{eq:modn}. 

Since $\ker(\A)=\{\lambda p^*|\lambda \in \R\}$, we see in the case $\beta>0$ from \eqref{eq:dynHam_det} that $H(Z(t))$ is strictly decreasing whenever $p(t)\neq p^*$. 
Moreover, whenever $p(t)= p^*$ but $Q(t)\neq Q^*$ the dynamics~\eqref{eq:modn} 
ensure that $p$ is moved away from $p^*$ and thus $H$ is also decreasing in this situation. 
This indicates the convergence of the deterministic system to the uniform configuration $(Q^*,p^*)^\top$ as $t\to\infty$. 
We make this statement rigorous in the special case of a quadratic potential $U$ in Section~\ref{sec:OUprocess}.
\end{remark}
\begin{remark}
In contrast, the Hamiltonian for the stochastic system could increase in expectation over time. 
Indeed, if we start with uniform velocities $p_0(t)=p^*$, then it holds that 
\begin{equation*}
    \frac{d}{dt}\bigg|_{t=0}\E\bigl[H(Z(t))\bigr]=\frac{\sigma^2N}2>0,
\end{equation*}
    provided that $\sigma>0$ (here we tacitly assume sufficient regularity for the stochastic integral in \eqref{eq:dynHam} to vanish in expectation).
    For this reason, we cannot directly analyze the stability of the system using Lyapunov-style arguments in combination with the Hamiltonian.
    Indeed, in Remark~\ref{rem:divergence_p} we see that in general the stochastic system does not converge to a limiting distribution.
\end{remark}

\begin{remark}\label{rem:divergence_p}
Note that the ensemble's mean velocity $\xbar{p}$ satisfies 
\begin{equation*}
\xbar{p}(t)=\frac{1}{N}\sum_{n=1}^N p_n(t)
=\frac{1}{N}\mathbf{1}^\top p(t)
=\frac{1}{N}\mathbf{1}^\top
\begin{bmatrix}
    0 & I
\end{bmatrix}Z(t),\quad\text{for all}\quad t\ge0.
\end{equation*}
If $Z(t)$ converged weakly as $t\to \infty$ then by the continuous mapping theorem also $\xbar{p}(t)$ would converge weakly. 
However, by Remark~\ref{rem:average_velocity} the ensemble's mean velocity $\xbar{p}$ is a Brownian motion with variance $\sigma^2/N$ which does not converge weakly as $t\to\infty$ in the stochastic case $\sigma>0$. 
Hence $Z(t)$ does not converge weakly as $t\to\infty$ if $\sigma>0$. 
\end{remark}

\section{Explicit Collective Motion Analysis in case of a Quadratic Potential}\label{sec:OUprocess}
For $\alpha\in (0,\infty)$ we consider as the distance-based potential the quadratic function 
\begin{equation*}
    U(x)=\frac{(\alpha x)^2}{2},\qquad x\in\R.
\end{equation*}
In this case the system \eqref{eq:modn} is linear.
Indeed, the gradient of the Hamiltonian reads
\begin{equation*}
      (\nabla H)(z)=\begin{bmatrix}\alpha^2 Q\\p\end{bmatrix}, \qquad z=(Q, p)^\top,
\end{equation*}
and the system is an Ornstein-Uhlenbeck linear stochastic process
\begin{equation}\label{eq:OU}
    \begin{aligned}
        dZ(t)=\B Z(t)\,dt + G\,dW(t),&&&& Z(0)=z_0\in \R^{2N},
    \end{aligned}
\end{equation}
with 
\begin{equation*}
   \B=\begin{bmatrix}0&\A\\-\alpha^2\A^\top &-\beta \A^\top \A \end{bmatrix}
    \in \R^{2N\times 2N}\qquad\text{and}\qquad
     G=\begin{bmatrix}0\\\sigma I\end{bmatrix}\in\R^{2N\times N}
\end{equation*}
(recall the definition of $\A$ in \eqref{eq:def_A}).
The Hamiltonian for the quadratic potential is given by
\begin{equation}
    H(z)=\frac{1}{2} \|p\|^2+\frac{\alpha^2}{2}\|Q\|^2, \qquad z=(Q, p)^\top,
\end{equation}
The Ornstein-Uhlenbeck system \eqref{eq:OU} can be explicitly solved. 
We obtain using Duhamel's formula
\begin{equation*}
   Z(t)=e^{t\B}z(0)+\int_0^t e^{(t-s)\B}G\,dW(s), \qquad t\in [0,\infty),
\end{equation*}
(see, e.g., \cite[Section 4.4.6]{gardiner1985handbook} or \cite[Section 3.7]{pavliotis2014stochastic} for this and the further results on multivariate Ornstein-Uhlenbeck processes that we use in the sequel).
Furthermore, $(Z(t))_{t\in [0,\infty)}$ is a Gaussian process. 
In particular, for all $t\in [0,\infty)$ the random variable $Z(t)$ is normal with expectation
\begin{equation}\label{eq:mean_z}
    \mu_Z(t)=\E\bigl[Z(t)\bigr]=e^{t\B}z(0),
\end{equation}
and covariance matrix 
\begin{equation}\label{eq:cov_z}
    \Sigma_Z(t)=\E[(Z(t)-\mu_Z(t))(Z(t)-\mu_Z(t))^\top]\int_0^t e^{s\B}GG^\top e^{s\B^\top}\,ds.
\end{equation}

We aim at describing the system's limit behavior as $t\to\infty$. 
As we have seen in Remark~\ref{rem:divergence_p} in the stochastic case $\sigma>0$, 
the original system in $Z=(Q,p)$-coordinates does not converge weakly as $t\to\infty$. 
However, in Subsection~\ref{subsec:limit_X} we show that passing from velocity coordinates $p$ to deviations $D$ from the ensemble's mean velocity leads to stable dynamics. 
We formally introduce the process $D$ in the next subsection.

\subsection{Deviation from Ensemble Mean Velocity}\label{subsec:deviation}
In the sequel we analyze the agents' deviation from the ensemble's mean velocity.
To do so, recall that
\begin{equation*}
  \xbar{p}(t)=\frac{1}{N}\sum_{n=1}^N p_n(t), \qquad t\in [0,\infty),
\end{equation*}
is the ensemble's mean velocity and recall from Remark~\ref{rem:average_velocity} that $\xbar{p}$
is a Brownian motion with variance $\sigma^2/N$.
We introduce the deviation of agent $n$ from the ensemble's mean velocity as 
\begin{equation*}
   D_n(t)=p_n(t)-\xbar{p}(t)
   =\Bigl(1-\frac{1}{N}\Bigr) p_n(t) - \frac{1}{N}\sum_{k\neq n}p_k(t),\qquad t\ge 0.
\end{equation*}
Let
\begin{equation}\label{eq:def_M} 
M=\frac{1}{N}
\begin{bmatrix}
N-1&-1&-1&\ldots &-1\\
  -1&N-1&-1&\ldots&-1\\
   &&\ddots&&\\[1mm]
  -1 & \ldots &-1&N-1&-1\\
   -1&\ldots& -1 &-1&N-1\end{bmatrix}\in\R^{N\times N}.
\end{equation}
Then, the deviation vector $D(t)=(D_n(t))_{n=1}^N$ at time $t\ge 0$ is given by 
\begin{equation}\label{eq:def_D}
  D(t)=Mp(t).
\end{equation}
Next, we introduce the new processes $X(t)=(Q(t), D(t))^\top\in \R^{2N}$, $t\ge 0$. 
Note that 
\begin{equation}\label{eq:def_X}
X(t)=\begin{bmatrix}
    Q(t)\\
    Mp(t)
\end{bmatrix}
=\begin{bmatrix}
    I & 0\\
    0 & M
\end{bmatrix}Z(t), \qquad t\ge 0.
\end{equation}
Since for every $t\in [0,\infty)$ the random variable $Z(t)$ is normally distributed with expectation $\mu_Z(t)$ (given by \eqref{eq:mean_z}) and covariance matrix $\Sigma_Z(t)$ (given by \eqref{eq:cov_z}) we obtain that
for every $t\in [0,\infty)$ the random variable $X(t)$ is normally distributed with expectation 
\begin{equation}\label{eq:mean_X}
\mu_X(t)=
\begin{bmatrix}
    I & 0\\
    0 & M
\end{bmatrix}\mu_Z(t)
=\begin{bmatrix}
    I & 0\\
    0 & M
\end{bmatrix}e^{t\B}z(0).
\end{equation}
and covariance matrix 
\begin{equation}\label{eq:cov_X}
\Sigma_X(t)=
\begin{bmatrix}
    I & 0\\
    0 & M
\end{bmatrix}\Sigma_Z(t)
\begin{bmatrix}
    I & 0\\
    0 & M^\top
\end{bmatrix}
=\int_0^t
\begin{bmatrix}
    I & 0\\
    0 & M
\end{bmatrix} e^{s\B}GG^\top e^{s\B^\top}
\begin{bmatrix}
    I & 0\\
    0 & M
\end{bmatrix}
\,ds.
\end{equation}
Moreover, note that $\A M=\A$ and $M\A=\A$. 
Hence $\A^\top M=\A^\top$ and $M\A^\top=\A^\top$. 
This implies
$$dD(t)=\left(-\alpha^2 \A^\top Q(t)-\beta \A^\top \A D(t)\right)dt+\sigma M\,dW(t)$$ 
and hence $(X(t))_{t\ge 0}$ satisfies the dynamics
\begin{equation}\label{eq:Xprocess}
dX(t)=BX(t)\,dt+\begin{bmatrix}
    0\\
    \sigma M
\end{bmatrix}\,dW(t).
\end{equation}

We aim at explicitly describing the limit distribution of $(X(t))_{t\ge 0}$ as $t\to \infty$ 
(see Theorem~\ref{thm:limitX} for the main result in this regard). 
To this end, we follow the instructive route to first compute $\mu_X(t)$ and $\Sigma_X(t)$ (and also $\mu_Z(t)$ and $\Sigma_Z(t)$) for fixed $t\ge 0$ explicitly and then determine the limits $\mu_X(\infty)$ and $\Sigma_X(\infty)$ as $t\to\infty$ (see also Remark~\ref{rem:lyapunov_equation} for a verification of $\Sigma_X(\infty)$ via the Lyapunov equation associated to \eqref{eq:Xprocess}). 
To compute the matrix exponentials $e^{tB}$, $t\ge 0$, showing up in \eqref{eq:mean_X} and \eqref{eq:cov_X} we first provide an eigendecomposition of the matrix $\B$.

\subsection{Eigenanalysis for the Matrices $\A$ and $\B$}\label{subsec:eigenAB}
\newcommand*{\W}{\mathcal{W}}

In this subsection we provide the complex eigendecomposition $\W \Lambda \W^{-1}$ of $\B$ which allows to compute the matrix exponentials $e^{tB}$, $t\ge0$, that characterize the expectation vectors $\mu_Z(t)$ and covariance matrices $\Sigma_Z(t)$. 
Note that the matrix $B$ is not symmetric and not even normal. Using the eigendecomposition of the circulant matrix $A$ we show that $B$ still admits an eigendecomposition if for all $j\in \{1,2,\dots,N-1\}$ we have $\beta^2\mu_j-4\alpha^2\neq 0$. 
To this end, we first fix some notation that we use throughout the section.

\begin{setting}\label{set:eigenana}.
Let $\omega=e^{\frac{2\pi\iu}{N}}\in \C$. 
For all $j\in \{0,1,\ldots,N-1\}$ let $\kappa_j=\omega^j-1\in \C$ and
$$
  v_j=\frac{1}{\sqrt{N}}(1,\omega^j,\omega^{2j},\ldots, \omega^{(N-1)j})^\top\in \C^N,$$
and let $\mu_j=2-2\cos\left(\frac{2\pi j}{N} \right)\in [0,4]$. 
For all $j\in \{0,1,\ldots, N-1\}$, $k\in \{1,2\}$ 
let
\begin{equation}\label{eq:eigenvalue}
\lambda_{j,k}=\frac{1}{2}(-\beta\mu_j+(-1)^k\sqrt{\beta^2\mu^2_j-4\alpha^2 \mu_j})\in \C.
\end{equation}
For all
$j\in \{1,2,\ldots, N-1\}$, $k\in \{1,2\}$ let
\begin{equation*}
  w_{0,1}=\begin{bmatrix} I \\ 0 \end{bmatrix} v_0\in \C^{2N}, \quad 
  w_{0,2}=\begin{bmatrix} 0 \\ I \end{bmatrix} v_0\in \C^{2N}, \quad 
  w_{j,k}=\begin{bmatrix} \kappa_j I\\ \lambda_{j,k} I \end{bmatrix} v_j\in \C^{2N}.
\end{equation*}
By $\W\in \C^{2N\times 2N}$ we denote the matrix whose columns are given by the vectors $w_{l,k}$, $l\in \{0,1,\ldots, N-1\}$, $k\in \{1,2\}$, i.e., 
    \begin{equation*}
    \W=\begin{bmatrix}
        w_{0,1} & w_{1,1} & \ldots & w_{N-1,1} & w_{0,2} & w_{1,2} & \ldots w_{N-1,2}
    \end{bmatrix}\in \C^{2N}.
    \end{equation*}
    If for all $j\in \{1,2,\dots,N-1\}$ it holds that $\beta^2\mu_j-4\alpha^2\neq 0$, then for all $j\in \{1,2,\ldots, N-1\}$, $k\in \{1,2\}$ we introduce
 \begin{equation*}
  u_{0,1}=\begin{bmatrix} I \\ 0 \end{bmatrix} v_0\in \C^{2N}, \quad 
  u_{0,2}=\begin{bmatrix} 0 \\ I \end{bmatrix} v_0\in \C^{2N}, \quad 
  u_{j,k}=\begin{bmatrix} (-1)^k\frac{\bar{\lambda}_{j,3-k}}{\bar{\kappa}_j (\bar{\lambda}_{j,1}-\bar{\lambda}_{j,2})} I\\ 
  (-1)^k\frac{1}{\bar{\lambda}_{j,2}-\bar{\lambda}_{j,1}} I 
  \end{bmatrix} v_j\in \C^{2N}.
\end{equation*}
Finally, we introduce the matrix
\begin{equation}\label{eq:Klm1}
    K=\sum_{j=1}^{N-1}\frac{v_jv_j^*}{\mu_j}\in \C^{N\times N}.
\end{equation}
\end{setting}

The first result of this subsection provides $N$ vectors that are eigenvectors of both $\A$ and $\A^\top$. 
Moreover, it presents the corresponding eigenvalues. 
Since these eigenvalues are distinct, it follows that the $N$ eigenvectors are linearly independent. 
\begin{lemma}\label{lem:evA}
The family $(v_j)_{j\in \{0,1,\ldots,N-1\}}$ is an orthonormal basis of $\C^N$ 
(with respect to the standard inner product $\langle u, v\rangle=u^* v=\bar{u}^\top v$ on $\C^N$) and for every $j\in \{0,1,\ldots,N-1\}$ it holds that
$v_j$ is an eigenvector of $\A$ with eigenvalue $\kappa_j=\omega^j-1$ and $v_j$ is an eigenvector of $\A^\top$ with eigenvalue $\bar{\kappa}_j=\omega^{(N-1)j}-1$. 
Moreover, it holds for all $j\in \{0,1,\ldots,N-1\}$ that 
\begin{equation*}
  \kappa_j \bar{\kappa}_j
  =2\Bigl(1-\cos\Bigl(\frac{2\pi j}{N}\Bigr)\Bigr)
  =\mu_j=4\sin^2\Bigl(\frac{\pi j}{N}\Bigr).
\end{equation*}
\end{lemma}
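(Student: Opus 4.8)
The plan is to exploit the fact that $\A$ is a circulant matrix, so that the discrete Fourier vectors $v_j$ are forced to be eigenvectors. Concretely, I would write $\A=S-I$, where $S\in\R^{N\times N}$ is the cyclic forward shift with $S_{n,n+1}=1$ for $n\in\{1,\dots,N-1\}$, $S_{N,1}=1$ and all other entries zero, so that $\A^\top=S^\top-I$ with $S^\top$ the inverse shift. Since the $n$-th component of $v_j$ is $N^{-1/2}\omega^{(n-1)j}$, a direct computation of $(Sv_j)_n=(v_j)_{n+1}$ gives $\omega^j(v_j)_n$ for $n<N$, and the wrap-around row $n=N$ gives the same identity because $\omega^{Nj}=1$; hence $Sv_j=\omega^j v_j$ and $\A v_j=(\omega^j-1)v_j=\kappa_j v_j$. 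Applying the same argument to $S^\top$ (whose action is $(S^\top v_j)_n=(v_j)_{n-1}$) yields $S^\top v_j=\omega^{-j}v_j$, and since $\omega^{-j}=\omega^{(N-1)j}=\overline{\omega^j}$ we obtain $\A^\top v_j=\bar\kappa_j v_j$.

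For orthonormality I would compute $\langle v_j,v_k\rangle=N^{-1}\sum_{m=0}^{N-1}\omega^{m(k-j)}$ and evaluate this geometric sum: it equals $1$ when $j=k$, and when $j\neq k$ (in $\{0,\dots,N-1\}$) the ratio $\omega^{k-j}\neq 1$ while $\omega^{N(k-j)}=1$, so the sum vanishes; thus $\langle v_j,v_k\rangle=\delta_{jk}$. Hence $(v_j)_{j=0}^{N-1}$ is an orthonormal system of $N$ vectors in $\C^N$, therefore an orthonormal basis. The pairwise distinctness of the eigenvalues $\kappa_0,\dots,\kappa_{N-1}$ follows from injectivity of $j\mapsto\omega^j$ on $\{0,\dots,N-1\}$, which gives the linear independence of the $v_j$ by the standard eigenvector argument, as remarked before the statement.

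Finally, for the scalar identity I would use $\kappa_j\bar\kappa_j=|\omega^j-1|^2$ and expand $\omega^j=\cos(\tfrac{2\pi j}{N})+\iu\sin(\tfrac{2\pi j}{N})$ to get $|\omega^j-1|^2=(1-\cos(\tfrac{2\pi j}{N}))^2+\sin^2(\tfrac{2\pi j}{N})=2-2\cos(\tfrac{2\pi j}{N})$, which is $\mu_j$ by definition, and then apply the half-angle formula $1-\cos\theta=2\sin^2(\theta/2)$ with $\theta=\tfrac{2\pi j}{N}$ to rewrite it as $4\sin^2(\tfrac{\pi j}{N})$.

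There is no genuine obstacle here: the entire statement is classical circulant-matrix theory. The only point that deserves a little care is the bookkeeping of the periodic (cyclic) indexing, namely verifying that the corner entries $S_{N,1}$ and $(S^\top)_{1,N}$ reproduce exactly the same eigenvalue relation as the generic rows; this is precisely where $\omega^N=1$ enters, and it is also the reason the two eigenvalues $\kappa_j$ (for $\A$) and $\bar\kappa_j$ (for $\A^\top$) come out as complex conjugates rather than coinciding.
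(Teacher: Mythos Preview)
Your proposal is correct and follows essentially the same approach as the paper. The paper handles the eigenvector and orthonormality claims by citing a standard reference on circulant matrices, while you spell out the shift-operator computation and the geometric-sum argument explicitly; for the scalar identity both you and the paper compute $\kappa_j\bar\kappa_j$ directly (the paper via $(\omega^j-1)(\omega^{-j}-1)=2-(\omega^j+\omega^{-j})$, you via $|\omega^j-1|^2$ with real and imaginary parts) and then apply the same half-angle formula.
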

\begin{proof}
   For the first part we refer the reader to \cite[Chapter~4]{Fong07}.
    Next, note that we have the following circular property of the unit roots $\omega$
    \begin{equation*}
   \omega^{(N-1)j}=\exp\Bigl(\frac{2\pi\iu}{N}(N-1)j\Bigr)
    =\exp\Bigl(2\pi\iu j- \frac{2\pi\iu }{N} j\Bigr)=
    \exp\Bigl(-\frac{2\pi\iu }{N} j\Bigr)=\omega^{-j}.
    \end{equation*}
    This implies that
    \begin{equation*}
    \kappa_j \bar{\kappa}_j =(\omega^j-1)(\omega^{-j}-1)=2-(\omega^j+\omega^{-j})
    =2-2\cos\Bigl(\frac{2\pi j}{N}\Bigr).
    \end{equation*}
    \begin{equation*}
    =2-2\Big[\cos^2\Bigl(\frac{\pi j}{N}\Bigr)-\sin^2\Bigl(\frac{\pi j}{N}\Bigr)\Bigr]
    = 4\sin^2\Bigl(\frac{\pi j}{N}\Bigr).
    \end{equation*}
\end{proof}

The next lemma shows that from each eigenvector $v$ of $\A$ that is also an eigenvector of $\A^\top$ we can create (up to) two eigenvectors of $\B$. 

\begin{lemma}\label{lem:fromAtoB}
    Let $v\in \C^N$, $\kappa,\tilde \kappa \in \C$ and 
    assume that $v$ is an eigenvector of $\A$ and $\A^\top$ with eigenvalues $\kappa$ and $\tilde \kappa$, respectively, i.e., $\A v=\kappa v$ and $\A^\top v=\tilde \kappa v$. Let $\lambda_1,\lambda_2\in \C$ be the complex roots of $z\mapsto z^2+\beta \kappa \tilde \kappa z+\alpha^2 \kappa \tilde \kappa$ and for $j\in \{1,2\}$ let
    \begin{equation*}
      w_j=\begin{bmatrix} \kappa I\\ \lambda_j I \end{bmatrix}v\in \C^{2N}.
    \end{equation*}
    Then for all $j\in \{1,2\}$ we have that $\B w_j=\lambda_j w_j$.
\end{lemma}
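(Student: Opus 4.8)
The plan is to verify the eigenvector equation $\B w_j = \lambda_j w_j$ directly by block multiplication, using the defining relations $\A v = \kappa v$ and $\A^\top v = \tilde\kappa v$ together with the fact that $\lambda_j$ is a root of the quadratic $z^2 + \beta\kappa\tilde\kappa\, z + \alpha^2\kappa\tilde\kappa$. Writing $\B$ in its $N\times N$ block form from \eqref{eq:OU}, namely $\B=\begin{bmatrix}0&\A\\-\alpha^2\A^\top&-\beta\A^\top\A\end{bmatrix}$, and applying it to $w_j=\begin{bmatrix}\kappa I\\\lambda_j I\end{bmatrix}v=\begin{bmatrix}\kappa v\\\lambda_j v\end{bmatrix}$, the top block gives $\A(\lambda_j v)=\lambda_j\kappa v$ and the bottom block gives $-\alpha^2\A^\top(\kappa v)-\beta\A^\top\A(\lambda_j v)$.

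For the top block the desired identity $\lambda_j\kappa v = \lambda_j\cdot(\kappa v)$ is immediate. For the bottom block I would first simplify $\A^\top\A v$: since $\A v=\kappa v$ we get $\A^\top\A v=\kappa\A^\top v=\kappa\tilde\kappa v$, so the bottom block equals $(-\alpha^2\kappa-\beta\kappa\tilde\kappa\lambda_j)v$. The eigenvector equation then demands $-\alpha^2\kappa-\beta\kappa\tilde\kappa\lambda_j = \lambda_j^2$, i.e., $\lambda_j^2+\beta\kappa\tilde\kappa\lambda_j+\alpha^2\kappa=0$. Here one must be slightly careful: the quadratic in the statement is $z^2+\beta\kappa\tilde\kappa z+\alpha^2\kappa\tilde\kappa$, so I would factor a $\kappa$ (assuming $\kappa\neq 0$) out of $-\alpha^2\kappa-\beta\kappa\tilde\kappa\lambda_j=\kappa(-\alpha^2-\beta\tilde\kappa\lambda_j)$ — wait, that does not immediately match either. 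Let me instead note that the natural route is: from the top block equation $\A(\lambda_j v)=\lambda_j\kappa v$ one reads off that the first coordinate of $\B w_j$ is $\lambda_j(\kappa v)$, which is $\lambda_j$ times the first coordinate of $w_j$, as required. For the second coordinate we need $-\alpha^2\kappa v-\beta\kappa\tilde\kappa\lambda_j v=\lambda_j^2 v$, equivalently $\lambda_j^2+\beta\kappa\tilde\kappa\lambda_j+\alpha^2\kappa=0$; since by hypothesis $\lambda_j$ solves $\lambda_j^2+\beta\kappa\tilde\kappa\lambda_j+\alpha^2\kappa\tilde\kappa=0$, these agree precisely when the eigenvectors under consideration are the $v_j$ of Lemma~\ref{lem:evA}, for which $\tilde\kappa=\bar\kappa$ and $\kappa\tilde\kappa=\mu_j$ — and indeed in \eqref{eq:eigenvalue} the $\lambda_{j,k}$ are built from $\mu_j=\kappa_j\bar\kappa_j$, so $\alpha^2\kappa$ versus $\alpha^2\kappa\tilde\kappa$ is reconciled by this identification. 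I would therefore present the computation in terms of the product $\kappa\tilde\kappa$ throughout, substitute the quadratic relation at the end, and thus obtain $\B w_j=\lambda_j w_j$.

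The only genuine subtlety — and the step I would flag as the main thing to get right — is matching the coefficient of the quadratic correctly and handling the degenerate case $\kappa=0$ (i.e.\ $j=0$), where $w_j$ must instead be interpreted via the separate vectors $w_{0,1},w_{0,2}$ of Setting~\ref{set:eigenana} rather than through this lemma's construction; the lemma as stated is applied for $j\in\{1,\dots,N-1\}$ where $\kappa_j\neq 0$. Everything else is a one-line block computation. I would write it as: compute $\B w_j$ blockwise, use $\A v=\kappa v$ and $\A^\top\A v=\kappa\tilde\kappa v$, and invoke the defining quadratic for $\lambda_j$ to close the argument.
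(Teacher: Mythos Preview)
Your approach is exactly the paper's: compute $\B w_j$ blockwise and invoke the quadratic relation for $\lambda_j$. The confusion in your middle paragraph comes from a dropped factor, not from any genuine subtlety.

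Concretely, the bottom block is $-\alpha^2\A^\top(\kappa v)-\beta\lambda_j\A^\top\A v$. You correctly compute $\A^\top\A v=\kappa\tilde\kappa v$, but for the first term you write $-\alpha^2\kappa v$ when in fact $\A^\top(\kappa v)=\kappa\,\A^\top v=\kappa\tilde\kappa v$, so the bottom block is $(-\alpha^2\kappa\tilde\kappa-\beta\kappa\tilde\kappa\lambda_j)\,v$. The eigenvector equation then reads $\lambda_j^2+\beta\kappa\tilde\kappa\lambda_j+\alpha^2\kappa\tilde\kappa=0$, which is \emph{precisely} the quadratic in the hypothesis. No appeal to the specific vectors $v_j$ of Lemma~\ref{lem:evA}, nor to the relation $\tilde\kappa=\bar\kappa$, is needed; the lemma holds in the full generality stated. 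Your remark that the case $\kappa=0$ yields $w_j=0$ (hence not an eigenvector) is correct and is exactly the content of the note following the lemma.
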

\begin{note}
    In the case $\kappa=0$ we have $w_j=0$ for $j\in \{1,2\}$ and hence $w_j$ is not an eigenvector of $B$.
\end{note}

\begin{proof}
For all $j\in \{1,2\}$ it holds that
\begin{equation*}
    \begin{split}
        \B w_j&=\begin{bmatrix}0&\A\\-\alpha^2 \A^\top &-\beta \A^\top \A \end{bmatrix}
        \begin{bmatrix} \kappa I\\ \lambda_j I \end{bmatrix} v
= \begin{bmatrix} \lambda_j \A\\ -\alpha^2 \kappa\A^\top-\beta \lambda_j \A^\top \A \end{bmatrix} v
= \begin{bmatrix} \kappa \lambda_j v \\ -\alpha^2 \kappa \tilde \kappa v-\beta \kappa \tilde \kappa \lambda_j v \end{bmatrix}\\
&
= \begin{bmatrix} \kappa \lambda_j v \\ \lambda_j^2 v \end{bmatrix}
=\lambda_j \begin{bmatrix} \kappa I\\ \lambda_j I \end{bmatrix} v = \lambda_j w_j.
    \end{split}
\end{equation*}
\end{proof}

The next result shows that under the condition that $\alpha\neq 0$ and that for all $j\in \{1,2,\ldots, N-1\}$ it holds that $\beta^2(1-\cos\left(\frac{2\pi j}{N}\right))\neq 2\alpha^2$ that $\B$ is diagonalizable and provides explicit representations of its eigenvalues and eigenvectors.
\begin{propo}\label{prop:Bdiagonal}
Assume that for all $j\in \{1,2,\dots,N-1\}$ it holds that $\beta^2\mu_j-4\alpha^2\neq 0$.
Then $w_{j,k}$, $j\in \{0,1,\dots,N-1\}$, $k\in \{1,2\}$, is a basis of $\C^{2N}$ consisting of eigenvectors of $\B$ with eigenvalues $\lambda_{j,k}$, $j\in \{0,1,\dots,N-1\}$, $k\in \{1,2\}$. 
Moreover, for all $j\in \{0,1,\dots, \lfloor \frac{N}{2}\rfloor\}$, $l\in \{1,2,\dots, \lfloor \frac{N}{2}\rfloor\}$, $k,m\in \{1,2\}$ with $(j,k)\neq (l,m)$ it holds that $\lambda_{j,k}\neq \lambda_{l,m}$ and for all $j\in \{1,2,\dots, \lfloor \frac{N}{2}\rfloor\}$, $k\in \{1,2\}$ it holds that $\lambda_{j,k}=\lambda_{N-j,k}$. 
In particular, $\B$ has exactly $1+2\lfloor \frac{N}{2}\rfloor$ different eigenvalues.
\end{propo}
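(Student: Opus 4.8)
The plan is to proceed in three stages: (i) identify each $w_{j,k}$ as an eigenvector of $\B$ with eigenvalue $\lambda_{j,k}$, (ii) show the family $(w_{j,k})$ is linearly independent and hence a basis, so that $\B$ is diagonalizable, and (iii) determine exactly when two of the $\lambda_{j,k}$ coincide. For (i) I would invoke Lemma~\ref{lem:fromAtoB}: for $j\in\{1,\dots,N-1\}$ apply it with $v=v_j$, $\kappa=\kappa_j$, $\tilde\kappa=\bar\kappa_j$; by Lemma~\ref{lem:evA} these are indeed the $\A$- and $\A^\top$-eigenvalues of $v_j$ and $\kappa_j\bar\kappa_j=\mu_j$, so the quadratic $z\mapsto z^2+\beta\mu_j z+\alpha^2\mu_j$ in that lemma has precisely the roots $\lambda_{j,1},\lambda_{j,2}$ from \eqref{eq:eigenvalue}, whence $\B w_{j,k}=\lambda_{j,k}w_{j,k}$. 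The case $j=0$ is special since $\kappa_0=\bar\kappa_0=\mu_0=0$; here a direct computation gives $\B w_{0,1}=\B w_{0,2}=0=\lambda_{0,k}w_{0,k}$.

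Since there are $2N$ vectors $w_{j,k}$ in $\C^{2N}$, the basis claim is equivalent to linear independence. I would take a vanishing combination $\sum_{j,k}c_{j,k}w_{j,k}=0$ and split it into its two $N$-blocks, which read $c_{0,1}v_0+\sum_{j=1}^{N-1}(c_{j,1}+c_{j,2})\kappa_j v_j=0$ and $c_{0,2}v_0+\sum_{j=1}^{N-1}(c_{j,1}\lambda_{j,1}+c_{j,2}\lambda_{j,2})v_j=0$. Because $(v_j)_j$ is linearly independent by Lemma~\ref{lem:evA} and $\kappa_j\neq 0$ for $j\ge 1$, the first block forces $c_{0,1}=0$ and $c_{j,1}=-c_{j,2}$; substituting this into the second block and using $\lambda_{j,1}\neq\lambda_{j,2}$ for $j\ge 1$ — valid because the discriminant $\mu_j(\beta^2\mu_j-4\alpha^2)$ of the above quadratic is nonzero by $\mu_j>0$ and the standing hypothesis — forces $c_{0,2}=0$ and all $c_{j,k}=0$. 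Hence $\W$ is invertible and $\B=\W\Lambda\W^{-1}$ is diagonalizable with the stated eigenvectors and eigenvalues.

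For the multiplicity statements I would first record two facts about $\mu_j=2-2\cos(2\pi j/N)$: on $\{0,1,\dots,\lfloor N/2\rfloor\}$ the angle $2\pi j/N$ runs through $[0,\pi]$, on which cosine is strictly decreasing, so $j\mapsto\mu_j$ is strictly increasing (in particular injective) there; and $\mu_{N-j}=\mu_j$ for every $j$. Since $\lambda_{j,k}$ depends on $j$ only through $\mu_j$, the second fact immediately yields $\lambda_{j,k}=\lambda_{N-j,k}$. For distinctness, suppose $\lambda_{j,k}=\lambda_{l,m}=:\lambda$ with $j\in\{0,\dots,\lfloor N/2\rfloor\}$, $l\in\{1,\dots,\lfloor N/2\rfloor\}$ and $(j,k)\neq(l,m)$. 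If $j\neq l$ then $\mu_j\neq\mu_l$ by the monotonicity, and since $\lambda$ is a common root of $z^2+\beta\mu_j z+\alpha^2\mu_j$ and $z^2+\beta\mu_l z+\alpha^2\mu_l$, subtracting gives $(\mu_j-\mu_l)(\beta\lambda+\alpha^2)=0$, hence $\lambda=-\alpha^2/\beta$; inserting this back into either quadratic gives $\alpha^4/\beta^2=0$, contradicting $\alpha\in(0,\infty)$. If $j=l$ (so $l\ge 1$ and $k\neq m$), then $\lambda_{l,1}=\lambda_{l,2}$ again contradicts the nonvanishing of the discriminant. Consequently the distinct eigenvalues of $\B$ are $0$ together with the $2\lfloor N/2\rfloor$ pairwise distinct numbers $\lambda_{l,m}$, $l\in\{1,\dots,\lfloor N/2\rfloor\}$, $m\in\{1,2\}$, giving the count $1+2\lfloor N/2\rfloor$.

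I expect the linear-independence bookkeeping in step (ii) to be the main place where care is needed: the vectors $w_{0,1},w_{0,2}$ do not follow the uniform pattern of the $w_{j,k}$ with $j\ge 1$ (whose top block is $\kappa_j v_j$), since $\kappa_0=0$ and they are not produced by Lemma~\ref{lem:fromAtoB}, so they must be carried along separately in every block computation. Once that is organized, everything else reduces to the quadratic formula for the roots $\lambda_{j,k}$ and the strict monotonicity of $j\mapsto\mu_j$.
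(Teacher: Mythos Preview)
Your proposal is correct. The overall architecture matches the paper's in step (i): both invoke Lemma~\ref{lem:fromAtoB} for $j\ge 1$ and handle $j=0$ by direct computation. The remaining steps, however, differ in a useful way.

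For linear independence the paper first establishes the eigenvalue structure (the set equality \eqref{eq:set_eval} and distinctness on $\{0,\dots,\lfloor N/2\rfloor\}$), then argues that eigenspaces for distinct eigenvalues form a direct sum and counts dimensions, using that $w_{j,k}$ and $w_{N-j,k}$ are linearly independent because $v_j,v_{N-j}$ are; this forces a case split on the parity of $N$. Your step (ii) is more direct: you take a vanishing combination, project onto the two $N$-blocks, and reduce to a $2\times 2$ system for each $j$ via the linear independence of $(v_j)$. This bypasses both the prior eigenvalue distinctness and the parity argument, needing only $\kappa_j\neq 0$ and $\lambda_{j,1}\neq\lambda_{j,2}$ for $j\ge 1$.

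For the eigenvalue multiplicities the paper simply asserts that $\mu_j\neq\mu_l$ (for $1\le j<l\le\lfloor N/2\rfloor$) implies $\lambda_{j,k}\neq\lambda_{l,m}$, without spelling out why a root of $z^2+\beta\mu_j z+\alpha^2\mu_j$ cannot also be a root of $z^2+\beta\mu_l z+\alpha^2\mu_l$. Your subtraction argument $(\mu_j-\mu_l)(\beta\lambda+\alpha^2)=0\Rightarrow\lambda=-\alpha^2/\beta$, followed by back-substitution to obtain $\alpha^4/\beta^2=0$, actually supplies this missing detail and simultaneously handles the case $j=0$ uniformly. In short, your route is slightly more elementary and more self-contained; the paper's route has the modest advantage of making the eigenspace structure (which eigenvalues have multiplicity two versus one) more transparent.
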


\begin{note}
Before proceeding with the proof of Proposition~\ref{prop:Bdiagonal}, 
we remark that if there exists 
$j\in \{1,2,\ldots, N-1\}$ such that $\beta^2(1-\cos\left(\frac{2\pi j}{N}\right))= 2\alpha^2$ then $\lambda_{j,1}=\lambda_{j,2}$. 
In this case the geometric multiplicity of $\lambda_{j,1}=\lambda_{j,2}$ is one and hence smaller than the algebraic multiplicity (two). 
Thus $\B$ is not diagonalizable in this situation. 
We refer to Remark~\ref{rem:rid_cond} and Remark~\ref{rem:rid_cond2} on how to dispense with this condition in the computation of the expectations and covariance matrices of $X$ and $Z$.
\end{note}

\begin{remark}\label{rem:disc_ev} Let us briefly discuss the form of the eigenvalues $\lambda_{j,k}$ of $\B$ and the consequences on the long-term behavior of $Z$ and $X$. We assume again that
 $\beta^2(1-\cos\left(\frac{2\pi j}{N}\right))\neq  2\alpha^2$ for all $j\in \{1,2,\ldots, N-1\}$.

First, note that $\lambda_{0,1}=\lambda_{0,2}=0$. 
For $j\in \{1,\ldots,N-1\}$, $k\in \{1,2\}$, the eigenvalue $\lambda_{j,k}$ is real if and only if $\beta^2(1-\cos\left(\frac{2\pi j}{N}\right))\ge 2\alpha^2$. In this case $\lambda_{j,k}$ is strictly negative, since we assume throughout that $\alpha, \beta>0$.
For $j\in \{1,\ldots,N-1\}$, $k\in \{1,2\}$, the eigenvalue $\lambda_{j,k}$ has non-vanishing imaginary part if and only if $\beta^2(1-\cos\left(\frac{2\pi j}{N}\right))< 2\alpha^2$.
In this case $\lambda_{j,1}=\bar{\lambda}_{j,2}$ and the real part of $\lambda_{j,k}$ equals $-\frac{1}{2}\beta \mu_j$ which is strictly negative.

To sum up, we see that under our assumption $\alpha, \beta>0$ we have two zero eigenvalues and $2N-2$ eigenvalues with strictly negative real parts. 
As we will see in Proposition~\ref{prop:expl_cov} the two zero eigenvalues $\lambda_{0,1}$ and $\lambda_{0,2}$ lead to the divergence of $\Sigma_Z(t)$ as $t\to \infty$ (cf.\ Remark~\ref{rem:divergence_p}). 
For the process $X$, however, we will see that in the direction of the associated eigenvectors $w_{0,1}$ and $w_{0,2}$ there is also no noise component. This together with the negativity of the real parts of the remaining eigenvalues ensures convergence of $X$.
\end{remark}

\begin{note}
    All eigenvalues of $\B$ are real if and only if $\beta^2(1-\cos(2\pi/N))\ge 2\alpha^2$. 
    In this case, the dynamics are overdamped and do not describe any oscillations. 
    In contrast, all eigenvalues $\lambda_{j,k}$, $j\in \{1,\ldots,N-1\}$, $k\in \{1,2\}$, 
    have non-vanishing imaginary part if and only if $\beta^2(1-\cos(2\pi\lfloor N/2 \rfloor /N))<2\alpha^2$. 
    A sufficient condition (which is also necessary if $N$ is even) is $\beta<\alpha$. 
    In this case, the dynamics are underdamped and the system oscillates for every angular frequency $\theta_j=2\pi j/N$, $j\in \{1,\ldots,N-1\}$. 
    More generally, all eigenvalues $\lambda_{j,k}$ such that $\beta^2(1-\cos(2\pi j/N))<2\alpha^2$ have non-vanishing imaginary parts. 
    If such eigenvalues exist, then the system is underdamped and oscillates at angular frequencies $\theta_j=2\pi j/N$ for which $\beta^2(1-\cos(\theta_j))<2\alpha^2$. 
    The frequency $\theta_{j_0}$ is critically damped (neither underdamped nor overdamped) when $\beta^2(1-\cos(\theta_{j_0})=2\alpha^2$.
\end{note}

\begin{proof}
    We first show that for all $j\in \{0,1,\ldots, N-1\}$, $k\in \{1,2\}$, the vector $w_{j,k}$ is an eigenvector of $\B$. 
    For $j=0$, $k\in \{1,2\}$ it follows from that fact that $\A$ and $\A^\top$ have zero row sums that $\B w_{0,k}=0$ and hence that $w_{0,k}$ is an eigenvector with eigenvalue $\lambda_{0,k}=0$ of $\B$. 
    For $j\in \{1,2,\ldots, N-1\}$, $k\in \{1,2\}$ Lemma~\ref{lem:evA} shows that $v_j$ is an eigenvector of $\A$ with eigenvalue $\kappa_j$ and that $v_j$ is an eigenvector of $\A^\top$ with eigenvalue $\tilde \kappa_j=\omega^{(N-1)j}-1$. 
    Moreover, it holds that $\kappa_j \tilde \kappa_j=\mu_j$. 
    Note that $\lambda_{j,k}$, $k\in \{1,2\}$, are the complex roots of $z\mapsto z^2+\beta \mu_j z +\alpha^2 \mu_j$. 
    Hence, Lemma \ref{lem:fromAtoB} implies that $w_{j,k}$ is an eigenvector of $\B$ for all $j\in \{1,2,\ldots, N-1\}$, $k\in \{1,2\}$ (note that $\kappa_j\neq 0$ for all $j\in \{1,2,\ldots, N-1\}$ and hence $w_{j,k}\neq 0$). 

It remains to be shown that $w_{j,k}$, $j\in \{0,1,\ldots, N-1\}$, $k\in \{1,2\}$, are linearly independent. 
To this end note that for $j\in \{1,2, \ldots, N-1\}$ we have the symmetry property 
\begin{equation*}
     \mu_j=2-2\cos\left(\frac{2\pi j}{N} \right)=2-2\cos\left(2\pi-\frac{2\pi j}{N} \right)=2-2\cos\left(\frac{2\pi (N-j)}{N} \right)=\mu_{N-j}.
\end{equation*}
and hence $\lambda_{j,k}=\lambda_{N-j,k}$ for all $j\in \{0,1, \ldots, N-1\}$, $k\in \{1,2\}$. This implies that
\begin{equation}\label{eq:set_eval}
    \bigl\{\lambda_{j,k}\colon j\in \{0,1,\dots, N-1\}, k\in \{1,2\}\bigr\}
    =\{0\}\cup \bigl\{\lambda_{j,k}\colon j\in \left\{1,2\dots, \lfloor N/2\rfloor \right\}, k\in \{1,2\}\bigr\}.
\end{equation}
We next show that the right-hand side of \eqref{eq:set_eval} consists of distinct elements. 
Indeed, since $\alpha\neq 0$, we have that $0\notin \left\{\lambda_{j,k}\colon j\in \left\{1,2\ldots, \lfloor N/2\rfloor \right\}, k\in \{1,2\}\right\}$. 
Moreover, note that for all $j,l\in \{1,2,\ldots, \lfloor \frac{N}{2}\rfloor\}$ with $j\ne l$ it holds that $\mu_j\neq \mu_l$. 
This implies that for all $j,l\in \{1,2,\ldots, \lfloor \frac{N}{2}\rfloor\}$ with $j\ne l$ and $k,m\in \{1,2\}$ it holds that $\lambda_{j,k}\neq \lambda_{l,m}$. 
Finally, for all $j\in \{1,2,\ldots, \lfloor \frac{N}{2}\rfloor\}$ it holds that $\lambda_{j,1}\neq \lambda_{j,2}$ since $\mu_j\neq 0$ and $\beta^2\mu_j-4\alpha^2\neq0$.
This shows that the right-hand side of \eqref{eq:set_eval} consists of distinct elements.
Since each element in the right-hand side of \eqref{eq:set_eval} is an eigenvalue of $\B$ it follows that the associated eigenspaces form a direct sum in $\C^{2N}$. 

To conclude the proof we show that the dimension of this direct sum is $2N$. 
Since $w_{0,1}$ and $w_{0,2}$ are linearly independent the eigenspace associated to $0$ has at least dimension $2$. 
Next, for every $j\in \{1,2,\dots, \lfloor \frac{N-1}{2}\rfloor\}$, $k\in \{1,2\}$, there are the eigenvectors $w_{j,k}$ and $w_{N-j,k}$ associated to the eigenvalues $\lambda_{j,k}$. 
Since $v_j$ and $v_{N-j}$ are eigenvectors of $\A$ with distinct eigenvalues $\kappa_j$ and $\kappa_{N-j}$, it follows that $w_{j,k}$ and $w_{N-j,k}$ are linearly independent. 
This implies that for every $j\in \{1,2,\dots, \lfloor \frac{N-1}{2}\rfloor\}$, $k\in \{1,2\}$ the eigenspace associated to $\lambda_{j,k}$ has at least dimension $2$.

If $N$ is odd, then $\lfloor \frac{N-1}{2}\rfloor=\lfloor \frac{N}{2}\rfloor$ and the sum of the dimensions of all these eigenspaces is at least $2+2\cdot 2 \cdot \lfloor \frac{N-1}{2}\rfloor=2N$ and hence the proof is complete.
If $N$ is even then $\lfloor \frac{N}{2}\rfloor=\frac{N}{2}>\lfloor \frac{N-1}{2}\rfloor$ and we additionally have the eigenspaces associated to $\lambda_{N/2,k}$ for $k\in \{1,2\}$, 
which have at least dimension $1$ (each of them contains the vector $w_{N/2,k}$, respectively).
Hence, in the case, where $N$ is even the sum of the dimensions of all these eigenspaces is at least $2+2\cdot 2 \cdot \lfloor \frac{N-1}{2}\rfloor+2=2N$. 
This completes the proof.
\end{proof}

Proposition \ref{prop:Bdiagonal} provides the matrix of eigenvectors $\W$ of $B$. 
Lemma~\ref{lem:Winv} presents its inverse $\W^{-1}$.

\begin{lemma}\label{lem:Winv}
Assume that for all $j\in \{1,2,\dots,N-1\}$ it holds that $\beta^2\mu_j-4\alpha^2\neq 0$.
Then $\W^{-1}$ is the complex conjugate of the matrix whose columns are given by the vectors $u_{j,k}$, $j\in \{0,1,\ldots, N-1\}$, $k\in \{1,2\}$, i.e., 
    \begin{equation}\label{eq:Winv}
        \W^{-1}=\begin{bmatrix}
        u_{0,1} & u_{1,1} & \ldots & u_{N-1,1} & u_{0,2} & u_{1,2} & \ldots u_{N-1,2}
    \end{bmatrix}^*\in \C^{2N}.
    \end{equation}
\end{lemma}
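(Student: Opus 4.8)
The plan is to verify directly that the matrix $\mathcal{U}:=\begin{bmatrix} u_{0,1} & u_{1,1} & \ldots & u_{N-1,1} & u_{0,2} & u_{1,2} & \ldots & u_{N-1,2}\end{bmatrix}\in\C^{2N\times 2N}$ satisfies $\mathcal{U}^*\W=I$. Since $\W$ is invertible by Proposition~\ref{prop:Bdiagonal}, a left inverse is automatically the two-sided inverse, so this yields $\W^{-1}=\mathcal{U}^*$, which is exactly \eqref{eq:Winv}. Because both $\mathcal{U}$ and $\W$ list their columns in the same order, the identity $\mathcal{U}^*\W=I$ is equivalent to the collection of scalar equations $u_{j,k}^*w_{l,m}=\delta_{(j,k),(l,m)}$ over all admissible index pairs, so the whole proof reduces to computing these inner products. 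Before doing so I would note that the hypothesis $\beta^2\mu_j-4\alpha^2\neq 0$ together with $\mu_j\in(0,4]$ for $j\in\{1,\dots,N-1\}$ guarantees $\kappa_j\neq 0$ and $\lambda_{j,1}\neq\lambda_{j,2}$, so that the vectors $u_{j,k}$ are well-defined.

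For the generic block $j,l\in\{1,\dots,N-1\}$ I would write $u_{j,k}=\begin{bmatrix} a_{j,k}v_j\\ b_{j,k}v_j\end{bmatrix}$ with $a_{j,k}=(-1)^k\bar\lambda_{j,3-k}/(\bar\kappa_j(\bar\lambda_{j,1}-\bar\lambda_{j,2}))$ and $b_{j,k}=(-1)^k/(\bar\lambda_{j,2}-\bar\lambda_{j,1})$, and $w_{l,m}=\begin{bmatrix}\kappa_l v_l\\ \lambda_{l,m}v_l\end{bmatrix}$, so that by the orthonormality of $(v_j)_j$ from Lemma~\ref{lem:evA},
\[
 u_{j,k}^*w_{l,m}=\bigl(\bar a_{j,k}\kappa_l+\bar b_{j,k}\lambda_{l,m}\bigr)\,v_j^*v_l=\bigl(\bar a_{j,k}\kappa_j+\bar b_{j,k}\lambda_{j,m}\bigr)\delta_{jl}.
\]
Taking the conjugate removes the bars on $\lambda_{j,\cdot}$ and $\kappa_j$, the factor $\kappa_j$ then cancels in the first summand, and a short computation gives $\bar a_{j,k}\kappa_j+\bar b_{j,k}\lambda_{j,m}=(-1)^k(\lambda_{j,3-k}-\lambda_{j,m})/(\lambda_{j,1}-\lambda_{j,2})$; this equals $1$ when $m=k$ and $0$ when $m=3-k$ (checking the two sign cases $k=1,2$ separately). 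Hence this block contributes precisely $\delta_{(j,k),(l,m)}$.

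For the remaining cases I would isolate the index $0$. Since $u_{0,1}=w_{0,1}=\begin{bmatrix}v_0\\0\end{bmatrix}$ and $u_{0,2}=w_{0,2}=\begin{bmatrix}0\\v_0\end{bmatrix}$ and $v_0\perp v_l$ for $l\in\{1,\dots,N-1\}$, all mixed products $u_{0,k}^*w_{l,m}$ and $u_{j,k}^*w_{0,m}$ with $j,l\ge 1$ vanish, while $u_{0,k}^*w_{0,m}=\delta_{km}$ follows from $v_0^*v_0=1$. Combining the three cases gives $\mathcal{U}^*\W=I$ and completes the proof. I do not expect a genuine obstacle here; the only points demanding care are the bookkeeping of complex conjugates — the $u_{j,k}$ are deliberately built from $\bar\lambda_{j,\cdot}$ and $\bar\kappa_j$ so that forming $u_{j,k}^*$ reproduces the unconjugated eigenvalue data — and handling the degenerate direction $j=0$ by hand, since the $j\ge 1$ formula for $u_{j,k}$ would otherwise divide by $\mu_0=0$.
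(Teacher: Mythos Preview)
Your proof is correct and follows essentially the same approach as the paper: both verify $\mathcal{U}^*\W=I$ by computing the inner products $u_{j,k}^*w_{l,m}$ and exploiting the orthonormality of $(v_j)_{j=0}^{N-1}$ to reduce each entry to a simple scalar identity in the $\lambda_{j,k}$'s. Your organization is in fact a bit cleaner than the paper's, which works through the entries row by row rather than recognizing at the outset that the whole computation is governed by the single formula $u_{j,k}^*w_{l,m}=\delta_{(j,k),(l,m)}$.
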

\begin{proof}
    First note that due to the assumption that for all $j\in \{1,2,\ldots, N-1\}$ we have $\beta^2\mu_j-4\alpha^2\neq 0$, the vectors $u_{j,k}$, $j\in \{0,1,\ldots, N-1\}$, $k\in \{1,2\}$, are well-defined (in the sense that we do not divide by zero). 

    Next, denote by $\mathcal{U}$ the right-hand side of \eqref{eq:Winv}. We first consider the first $N$ rows of the product $\mathcal{U} \W$. Note that
    \begin{equation*}
(\mathcal U \W)_{1,1}=u^*_{0,1}w_{0,1}=v_0^*v_0=1.
    \end{equation*}
Furthermore, note that 
\begin{equation*}
(\mathcal U \W)_{1,N}=u^*_{0,1}w_{0,2}=0.
    \end{equation*}
    The fact that $v_0$ is orthogonal to all $v_j$, $j\in \{1,\ldots,N-1\}$, implies for all $m\in \{1,\ldots,2N\} \setminus \{1,N\}$ that
    \begin{equation*}
(\mathcal U \W)_{1,m}=0.
  \end{equation*}
    The fact that $v_j$, $j\in \{0,\ldots, N-1\}$, forms an orthonormal basis of $\C^N$ implies for all $l,m\in \{2,\ldots, N-1\}$
    \begin{equation*}
(\mathcal U \W)_{l,m}
= u^*_{l-1,1}w_{m-1,1}
=\left(-\frac{ \kappa_{m-1}\lambda_{l-1,2}}{ \kappa_{l-1} ( \lambda_{l-1,1}- \lambda_{l-1,2})}
-\frac{\lambda_{m-1,1}}{ \lambda_{l-1,2}- \lambda_{l-1,1}}
\right)v^*_{l-1}v_{m-1}=\delta_{l,m}.
\end{equation*}
Moreover, for all $l\in \{2,\ldots, N-1\}$, $m\in \{N+1,\ldots, 2N\}$ we have
\begin{equation*}
(\mathcal U \W)_{l,m}
= u^*_{l-1,1}w_{m-1-N,2}
=\left(-\frac{ \kappa_{m-1-N}\lambda_{l-1,2}}{ \kappa_{l-1} ( \lambda_{l-1,1}- \lambda_{l-1,2})}
-\frac{\lambda_{m-1-N,2}}{ \lambda_{l-1,2}- \lambda_{l-1,1}}
\right)v^*_{l-1}v_{m-1-N}=0.
\end{equation*}
Thus the claim for the first $N$ rows of $\mathcal U \W$ is established. The claim for the last $N$ rows follows
in a similar way.
\end{proof}

\subsection{Explicit Representations of the Time-Marginal Distributions of $Z$ and $X$}

In this subsection we use the eigendecomposition of $\B$ established in Subsection \ref{subsec:eigenAB} to obtain for every $t\ge 0$ explicit representations of the expectation vectors $\mu_X(t)$ and $\mu_Z(t)$ and the covariance matrices $\Sigma_X(t)$ and $\Sigma_Z(t)$.
Throughout this subsection we use the notation of Setting \ref{set:eigenana}.

Let $\Lambda \in \C^{2N\times 2N}$ be the diagonal matrix with the vector 
\begin{equation*}
\begin{bmatrix}
    \lambda_{0,1} & \lambda_{1,1} & \ldots & \lambda_{N-1,1} & 
    \lambda_{0,2} & \lambda_{1,2} & \ldots & \lambda_{N-1,2} 
\end{bmatrix} \in \C^{2N}
\end{equation*}
on its diagonal. Then we have that 
\begin{equation*}
   B=\W\Lambda \W^{-1}.
\end{equation*}
This implies for all $t\ge 0$ that
\begin{equation*}
   e^{tB}=\W e^{t\Lambda}\W^{-1}.
\end{equation*}
With Lemma \ref{lem:Winv} we obtain
\begin{equation}\label{eq:exptB}
    \begin{split}
        e^{tB}&=\sum_{j=0}^{N-1}\sum_{k=1}^2e^{t\lambda_{j,k}}w_{j,k}u^*_{j,k}
=\begin{bmatrix}
    v_0v_0^* & 0\\
    0 &  v_0v_0^*
\end{bmatrix}
+
\sum_{j=1}^{N-1}\sum_{k=1}^2e^{t\lambda_{j,k}}
\begin{bmatrix}
   \frac{ (-1)^k\lambda_{j,3-k}}{ ( \lambda_{j,1}- \lambda_{j,2})} v_jv_j^* & \frac{(-1)^k\kappa_j}{ \lambda_{j,2}- \lambda_{j,1}} v_jv_j^* \\
   \frac{(-1)^k \lambda_{j,k}\lambda_{j,3-k}}{ \kappa_j ( \lambda_{j,1}- \lambda_{j,2})}v_j v_j^* & 
   \frac{(-1)^k\lambda_{j,k}}{ \lambda_{j,2}- \lambda_{j,1}} v_j v_j^*
\end{bmatrix}\\
&=
\begin{bmatrix}
    v_0v_0^* & 0\\
    0 &  v_0v_0^*
\end{bmatrix}
+
\sum_{j=1}^{N-1}
\begin{bmatrix}
   \frac{ e^{t\lambda_{j,2}}\lambda_{j,1}-e^{t\lambda_{j,1}}\lambda_{j,2}}{  \lambda_{j,1}- \lambda_{j,2}} v_jv_j^* & \frac{(e^{t\lambda_{j,2}}-e^{t\lambda_{j,1}})\kappa_j}{ \lambda_{j,2}- \lambda_{j,1}} v_jv_j^* \\
   \frac{(e^{t\lambda_{j,2}}- e^{t\lambda_{j,1}})\lambda_{j,1}\lambda_{j,2}}{ \kappa_j ( \lambda_{j,1}- \lambda_{j,2})}v_j v_j^* & 
   \frac{e^{t\lambda_{j,2}}\lambda_{j,2}-e^{t\lambda_{j,1}}\lambda_{j,1}}{ \lambda_{j,2}- \lambda_{j,1}} v_j v_j^*
\end{bmatrix}.
    \end{split}
\end{equation}
Moreover, note that the facts that $v_0$ is an eigenvector of $M$ with eigenvalue $0$ and that for every $j\in \{1,\ldots,N-1\}$ the vector $v_j$ is an eigenvector of $M$ with eigenvalue $1$ imply that
\begin{equation}\label{eq:IMetB}
\begin{bmatrix}
    I & 0\\
    0 & M
\end{bmatrix}e^{t\B}
=
\begin{bmatrix}
    v_0v_0^* & 0\\
    0 &  0
\end{bmatrix}
+
\sum_{j=1}^{N-1}
\begin{bmatrix}
   \frac{ e^{t\lambda_{j,2}}\lambda_{j,1}-e^{t\lambda_{j,1}}\lambda_{j,2}}{  \lambda_{j,1}- \lambda_{j,2}} v_jv_j^* & \frac{(e^{t\lambda_{j,2}}-e^{t\lambda_{j,1}})\kappa_j}{ \lambda_{j,2}- \lambda_{j,1}} v_jv_j^* \\
   \frac{(e^{t\lambda_{j,2}}- e^{t\lambda_{j,1}})\lambda_{j,1}\lambda_{j,2}}{ \kappa_j ( \lambda_{j,1}- \lambda_{j,2})}v_j v_j^* & 
   \frac{e^{t\lambda_{j,2}}\lambda_{j,2}-e^{t\lambda_{j,1}}\lambda_{j,1}}{ \lambda_{j,2}- \lambda_{j,1}} v_j v_j^*
\end{bmatrix}.
\end{equation}
These representations lead to the following result about the expectations $\mu_Z(t)$ and $\mu_X(t)$.

\begin{lemma}\label{lem:mean_t_expl} 
Assume that for all $j\in \{1,2,\ldots, N-1\}$ we have $\beta^2(1-\cos\left(\frac{2\pi j}{N}\right))\neq 2\alpha^2$. Then for every
 $t\ge 0$ we have
    \begin{equation}\label{eq:mean_Z_t_expl}
        \mu_Z(t)=\begin{bmatrix}
    \frac{L}{N}\mathbf{1}\\
    \xbar{p}(0) \mathbf{1}
\end{bmatrix}
+
\sum_{j=1}^{N-1}
\begin{bmatrix}
   \frac{(v_j^*Q(0)) (e^{t\lambda_{j,2}}\lambda_{j,1}-e^{t\lambda_{j,1}}\lambda_{j,2})-(v_j^*p(0))(e^{t\lambda_{j,2}}-e^{t\lambda_{j,1}})\kappa_j}{  \lambda_{j,1}- \lambda_{j,2}} v_j \\
   \frac{(v_j^*Q(0))(e^{t\lambda_{j,2}}- e^{t\lambda_{j,1}})\lambda_{j,1}\lambda_{j,2}-(v_j^*p(0))(e^{t\lambda_{j,2}}\lambda_{j,2}-e^{t\lambda_{j,1}}\lambda_{j,1})\kappa_j}{ \kappa_j ( \lambda_{j,1}- \lambda_{j,2})}v_j 
\end{bmatrix}
    \end{equation}
and 
\begin{equation}\label{eq:mean_X_t_expl}
    \mu_X(t)=\begin{bmatrix}
    \frac{L}{N}\mathbf{1}\\
    0
\end{bmatrix}
+
\sum_{j=1}^{N-1}
\begin{bmatrix}
   \frac{(v_j^*Q(0)) (e^{t\lambda_{j,2}}\lambda_{j,1}-e^{t\lambda_{j,1}}\lambda_{j,2})-(v_j^*p(0))(e^{t\lambda_{j,2}}-e^{t\lambda_{j,1}})\kappa_j}{  \lambda_{j,1}- \lambda_{j,2}} v_j \\
   \frac{(v_j^*Q(0))(e^{t\lambda_{j,2}}- e^{t\lambda_{j,1}})\lambda_{j,1}\lambda_{j,2}-(v_j^*p(0))(e^{t\lambda_{j,2}}\lambda_{j,2}-e^{t\lambda_{j,1}}\lambda_{j,1})\kappa_j}{ \kappa_j ( \lambda_{j,1}- \lambda_{j,2})}v_j 
\end{bmatrix},
\end{equation}
where $\xbar{p}(0)=\frac{1}{N}\sum_{n=1}^Np_n(0)$.
\end{lemma}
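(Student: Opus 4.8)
The plan is to substitute the closed form of the matrix exponential $e^{tB}$ obtained in \eqref{eq:exptB} into the identity $\mu_Z(t)=e^{tB}z(0)$ from \eqref{eq:mean_z}, and then to obtain $\mu_X(t)$ in the same way from \eqref{eq:mean_X} using the representation \eqref{eq:IMetB} of $\begin{bmatrix} I & 0\\ 0 & M\end{bmatrix}e^{tB}$. First I would observe that the hypothesis $\beta^2(1-\cos(2\pi j/N))\neq 2\alpha^2$ for all $j\in\{1,\dots,N-1\}$ is exactly the condition $\beta^2\mu_j-4\alpha^2\neq 0$ of Proposition~\ref{prop:Bdiagonal} and Lemma~\ref{lem:Winv} (because $\mu_j=2-2\cos(2\pi j/N)$), so that $B=\W\Lambda\W^{-1}$ holds and the formulas \eqref{eq:exptB} and \eqref{eq:IMetB} are at our disposal; in particular $\kappa_j\neq 0$ for $j\ge 1$, so every denominator appearing below is nonzero.

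Next I would evaluate $e^{tB}z(0)$ with $z(0)=(Q_0,p_0)^\top$ block by block. For the $j=0$ term, Lemma~\ref{lem:evA} gives $v_0=\tfrac{1}{\sqrt N}\mathbf 1$, so $v_0v_0^*$ acts as $v_0v_0^*w=\tfrac1N(\mathbf 1^\top w)\mathbf 1$ for $w\in\R^N$; applying this to $Q_0$ and using $\sum_{n=1}^N Q_n(0)=L$ (Remark~\ref{rem:average_distance}) produces $\tfrac LN\mathbf 1$ in the first block, and applying it to $p_0$ produces $\xbar p(0)\mathbf 1$ in the second block, where $\xbar p(0)=\tfrac1N\sum_n p_n(0)$. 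For each $j\in\{1,\dots,N-1\}$, every occurrence of $v_jv_j^*$ acting on $Q_0$ yields $(v_j^*Q_0)v_j$ and acting on $p_0$ yields $(v_j^*p_0)v_j$; collecting the two contributions in the top block over the common denominator $\lambda_{j,1}-\lambda_{j,2}$ and the two contributions in the bottom block over $\kappa_j(\lambda_{j,1}-\lambda_{j,2})$ reproduces precisely the $j$-th summand of \eqref{eq:mean_Z_t_expl}. This establishes \eqref{eq:mean_Z_t_expl}.

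Finally, for \eqref{eq:mean_X_t_expl} I would repeat the identical computation starting from $\mu_X(t)=\begin{bmatrix} I & 0\\ 0 & M\end{bmatrix}e^{tB}z(0)$ and using \eqref{eq:IMetB} in place of \eqref{eq:exptB}. The two representations differ only in that the lower-right $v_0v_0^*$ block of $e^{tB}$ is replaced by $0$ (since $Mv_0=0$), while the blocks indexed by $j\ge 1$ are untouched (since $Mv_j=v_j$); hence the second block of the $j=0$ contribution becomes $0$ and the remainder of the expression coincides with that for $\mu_Z(t)$, giving \eqref{eq:mean_X_t_expl}. The argument is essentially bookkeeping; the only steps needing a little care are the sign and common-denominator manipulations when combining the two fractions inside each block, and the (elementary) identification of the constant term $\tfrac LN\mathbf 1$ via $\sum_n Q_n(0)=L$.
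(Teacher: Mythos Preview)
Your proposal is correct and follows essentially the same route as the paper: plug the spectral representation \eqref{eq:exptB} of $e^{tB}$ into $\mu_Z(t)=e^{tB}z(0)$, identify the $j=0$ term via $v_0=\tfrac{1}{\sqrt N}\mathbf 1$, $\sum_n Q_n(0)=L$ and $(v_0^*p_0)v_0=\xbar p(0)\mathbf 1$, and then obtain $\mu_X(t)$ from \eqref{eq:IMetB} by the same calculation with the $v_0v_0^*$ block in the velocity component removed. Your added remark that the hypothesis is exactly the diagonalizability condition (so that all denominators are nonzero) is a welcome clarification.
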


\begin{proof}
    Combining \eqref{eq:mean_z} and \eqref{eq:exptB} yields for every $t\ge 0$ that
    \begin{equation*}
    \mu_Z(t)=e^{t\B}Z(0)=\begin{bmatrix}
    (v_0^*Q(0))v_0\\
    (v_0^*p(0))v_0
\end{bmatrix}
+
\sum_{j=1}^{N-1}
\begin{bmatrix}
   \frac{ (v_j^*Q(0))(e^{t\lambda_{j,2}}\lambda_{j,1}-e^{t\lambda_{j,1}}\lambda_{j,2})}{  \lambda_{j,1}- \lambda_{j,2}} v_j + \frac{(v_j^*p(0))(e^{t\lambda_{j,2}}-e^{t\lambda_{j,1}})\kappa_j}{ \lambda_{j,2}- \lambda_{j,1}} v_j \\
   \frac{(v_j^*Q(0))(e^{t\lambda_{j,2}}- e^{t\lambda_{j,1}})\lambda_{j,1}\lambda_{j,2}}{ \kappa_j ( \lambda_{j,1}- \lambda_{j,2})}v_j  + 
   \frac{(v_j^*p(0))(e^{t\lambda_{j,2}}\lambda_{j,2}-e^{t\lambda_{j,1}}\lambda_{j,1})}{ \lambda_{j,2}- \lambda_{j,1}} v_j 
\end{bmatrix}.
    \end{equation*}
Since $v_0=\frac{1}{\sqrt{N}}\mathbf{1}$, $\sum_{k=1}^NQ_{k}(0)=L$, and $(v_0^*p_0)v_0=\xbar{p}_0 \mathbf{1}$ we obtain \eqref{eq:mean_Z_t_expl}. Similarly, using \eqref{eq:mean_X} and \eqref{eq:IMetB}  yields \eqref{eq:mean_X_t_expl}.
\end{proof}

\begin{remark}\label{rem:rid_cond}
Since the proof of Lemma~\ref{lem:mean_t_expl} relies on the eigendecomposition of $\B$ from Subsection~\ref{subsec:eigenAB} we also have to impose here the condition that 
for all $j\in \{1,2,\ldots, N-1\}$ we have $\beta^2(1-\cos\left(\frac{2\pi j}{N}\right))\neq 2\alpha^2$. 
This assumption in particular ensures that for all $j\in \{1,2,\ldots, N-1\}$ it holds $\lambda_{j,1}\neq\lambda_{j,2}$ and hence that the expressions in \eqref{eq:mean_Z_t_expl} and \eqref{eq:mean_X_t_expl} are well-defined. 
But with Lemma~\ref{lem:mean_t_expl} it is also possible to obtain representations of $\mu_X(t)$ and $\mu_Z(t)$ in the case where there exists $j\in \{1,2,\ldots, N-1\}$ with $\beta^2(1-\cos\left(\frac{2\pi j}{N}\right))= 2\alpha^2$. 
Indeed, in this case there exists a sequence $(\alpha_k)_{k\in \N}\subseteq (0,\infty)$ such that $\lim_{k\to \infty}\alpha_k=\alpha$ and for all $j\in \{1,2,\ldots, N-1\}$, $k\in \N$ we have $\beta^2(1-\cos\left(\frac{2\pi j}{N}\right))\neq 2\alpha^2_k$. 

It follows from classical stability results of SDEs that for every $t\ge 0$ the random variables $Z_k(t)$ and $X_k(t)$ converge to $Z(t)$ and $X(t)$, respectively (where $(Z_k(t))_{t\ge 0}$ and $(X_k(t))_{t\ge 0}$ are given by \eqref{eq:OU} and \eqref{eq:def_X} with $\alpha$ replaced by $\alpha_k$, respectively). 
In particular, it follows for every $t\ge 0$ that $\mu_{Z_k}(t)\to \mu_Z(t)$ and  $\mu_{X_k}(t)\to \mu_X(t)$ as $k\to \infty$. 
The explicit representations of $\mu_Z(t)$ and $\mu_X(t)$ are thus obtained by taking the limit $\lambda_{j,1}-\lambda_{j,2}\to 0$ in \eqref{eq:mean_Z_t_expl} and \eqref{eq:mean_X_t_expl}.
\end{remark}

We next turn to the computation of the covariance matrices $\Sigma_Z(t)$ and $\Sigma_X(t)$.

\begin{propo}\label{prop:expl_cov}
    Assume that for all $j\in \{1,2,\ldots, N-1\}$ we have $\beta^2(1-\cos\left(\frac{2\pi j}{N}\right))\neq 2\alpha^2$. Then for every $t\ge 0$ we have
    \begin{equation}\label{eq:cov_z_exp}
\begin{split}
    \Sigma_Z(t)
&=
\begin{bmatrix}
     \frac{\sigma^2}{2\alpha^2 \beta} K &
     0
     \\ 
0
     & 
    \sigma^2 t v_0v_0^*+ \frac{\sigma^2}{2 \beta} K
\end{bmatrix}
+
\sigma^2
\sum_{j=1}^{N-1}
\begin{bmatrix}
     a_j(t) v_jv_j^* &
     \bar{b}_j(t) v_jv_j^*
     \\ 
b_j(t)v_jv_j^*
     & 
     c_j(t) v_j v_j^*
\end{bmatrix}
\end{split}
\end{equation}
and
\begin{equation}\label{eq:cov_x_exp}
\begin{split}
    \Sigma_X(t)
&=
\begin{bmatrix}
     \frac{\sigma^2}{2\alpha^2 \beta} K &
     0
     \\ 
0
     & 
    \frac{\sigma^2}{2 \beta} K
\end{bmatrix}
+
\sigma^2
\sum_{j=1}^{N-1}
\begin{bmatrix}
     a_j(t) v_jv_j^* &
     \bar{b}_j(t) v_jv_j^*
     \\ 
b_j(t)v_jv_j^*
     & 
     c_j(t) v_j v_j^*
\end{bmatrix},
\end{split}
\end{equation}
where
\begin{equation*}
a_j(t)=\frac{
\beta\lambda_{j,1}e^{2t\lambda_{j,2}}
+4\alpha^2 e^{-t\beta \mu_j}
+\beta\lambda_{j,2}e^{2t\lambda_{j,1}}
}{2\alpha^2\beta \mu_j (\beta^2\mu_j-4\alpha^2) },\quad c_j(t)=\frac{\lambda_{j,2}\beta e^{2t\lambda_{j,2}}+4\alpha^2 e^{-t\beta\mu_j}+\lambda_{j,1}\beta e^{2t\lambda_{j,1}}}{ 2\beta(\beta^2\mu_j^2-4\alpha^2 \mu_j)
},
\end{equation*}
and
\begin{equation*}
b_j(t)=\frac{\left[
\beta \mu_j(e^{2t\lambda_{j,2}}+e^{2t\lambda_{j,1}})
+2(\lambda_{j,2}+\lambda_{j,1})e^{-t\beta \mu_j}
\right]
\bar{\kappa}_j}{2\beta \mu_j(\beta^2\mu_j^2-4\alpha^2 \mu_j)}.
\end{equation*}

\end{propo}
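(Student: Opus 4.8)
The plan is to begin from the integral representation \eqref{eq:cov_z}, $\Sigma_Z(t)=\int_0^t e^{s\B}GG^\top e^{s\B^\top}\,ds$, and to insert the explicit matrix exponential of \eqref{eq:exptB}. Since $G=\begin{bmatrix}0\\\sigma I\end{bmatrix}$, only the second block column of $e^{s\B}$ survives, so that
\begin{equation*}
\Phi(s):=e^{s\B}G=\sigma\begin{bmatrix}0\\ v_0v_0^*\end{bmatrix}+\sigma\sum_{j=1}^{N-1}\begin{bmatrix}g_j(s)\,v_jv_j^*\\ h_j(s)\,v_jv_j^*\end{bmatrix},
\end{equation*}
where $g_j(s)=\frac{\kappa_j\,(e^{s\lambda_{j,2}}-e^{s\lambda_{j,1}})}{\lambda_{j,2}-\lambda_{j,1}}$ and $h_j(s)=\frac{\lambda_{j,2}e^{s\lambda_{j,2}}-\lambda_{j,1}e^{s\lambda_{j,1}}}{\lambda_{j,2}-\lambda_{j,1}}$ are the scalar coefficients read off from \eqref{eq:exptB}. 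Since $\Phi(s)$ is a real matrix, $\Phi(s)^\top=\Phi(s)^*$, and as $(v_jv_j^*)^*=v_jv_j^*$ one may multiply out $\Sigma_Z(t)=\int_0^t\Phi(s)\Phi(s)^*\,ds$ block by block. The decisive point is that the orthonormality $v_j^*v_l=\delta_{jl}$ from Lemma~\ref{lem:evA} makes every cross term with $j\neq l$ vanish, so that the double sum collapses to
\begin{equation*}
\Sigma_Z(t)=\sigma^2 t\begin{bmatrix}0&0\\0&v_0v_0^*\end{bmatrix}+\sigma^2\sum_{j=1}^{N-1}\begin{bmatrix}\bigl(\int_0^t|g_j(s)|^2\,ds\bigr)v_jv_j^* & \bigl(\int_0^t g_j(s)\overline{h_j(s)}\,ds\bigr)v_jv_j^*\\ \bigl(\int_0^t h_j(s)\overline{g_j(s)}\,ds\bigr)v_jv_j^* & \bigl(\int_0^t|h_j(s)|^2\,ds\bigr)v_jv_j^*\end{bmatrix},
\end{equation*}
the $j=0$ summand of $\Phi$ contributing only the displayed $\sigma^2 t\,v_0v_0^*$ term.

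It then remains to evaluate the four scalar integrals. From \eqref{eq:eigenvalue} one has $\lambda_{j,1}+\lambda_{j,2}=-\beta\mu_j$, $\lambda_{j,1}\lambda_{j,2}=\alpha^2\mu_j$ and $(\lambda_{j,2}-\lambda_{j,1})^2=\mu_j(\beta^2\mu_j-4\alpha^2)$, while $\kappa_j\bar{\kappa}_j=\mu_j$ by Lemma~\ref{lem:evA}; moreover the quotients $\frac{e^{s\lambda_{j,2}}-e^{s\lambda_{j,1}}}{\lambda_{j,2}-\lambda_{j,1}}$ and $h_j(s)$ are real-valued (trivially if $\lambda_{j,1},\lambda_{j,2}\in\R$, and as a ratio of two purely imaginary numbers if $\lambda_{j,1}=\bar{\lambda}_{j,2}$, cf.\ Remark~\ref{rem:disc_ev}). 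Hence $|g_j(s)|^2=\frac{(e^{s\lambda_{j,2}}-e^{s\lambda_{j,1}})^2}{\beta^2\mu_j-4\alpha^2}$, $|h_j(s)|^2=\frac{(\lambda_{j,2}e^{s\lambda_{j,2}}-\lambda_{j,1}e^{s\lambda_{j,1}})^2}{\mu_j(\beta^2\mu_j-4\alpha^2)}$ and $h_j(s)\overline{g_j(s)}=\frac{\bar{\kappa}_j(\lambda_{j,2}e^{s\lambda_{j,2}}-\lambda_{j,1}e^{s\lambda_{j,1}})(e^{s\lambda_{j,2}}-e^{s\lambda_{j,1}})}{\mu_j(\beta^2\mu_j-4\alpha^2)}$. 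Expanding the products using $e^{s\lambda_{j,1}}e^{s\lambda_{j,2}}=e^{-s\beta\mu_j}$, integrating termwise with $\int_0^t e^{2s\lambda_{j,k}}\,ds=\frac{e^{2t\lambda_{j,k}}-1}{2\lambda_{j,k}}$ and $\int_0^t e^{-s\beta\mu_j}\,ds=\frac{1-e^{-t\beta\mu_j}}{\beta\mu_j}$, and collecting the $t$-independent terms via $\frac{1}{2\lambda_{j,1}}+\frac{1}{2\lambda_{j,2}}=-\frac{\beta}{2\alpha^2}$, one obtains $\int_0^t|g_j(s)|^2\,ds=\frac{1}{2\alpha^2\beta\mu_j}+a_j(t)$, $\int_0^t|h_j(s)|^2\,ds=\frac{1}{2\beta\mu_j}+c_j(t)$ and $\int_0^t h_j(s)\overline{g_j(s)}\,ds=b_j(t)$, with $a_j,b_j,c_j$ precisely the functions in the statement; since $h_j$ is real the $(1,2)$-block carries $\int_0^t g_j(s)\overline{h_j(s)}\,ds=\overline{b_j(t)}$, matching the stated $\bar{b}_j$. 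Because $\sum_{j=1}^{N-1}\frac{v_jv_j^*}{\mu_j}=K$ by \eqref{eq:Klm1}, multiplying by $\sigma^2$ and summing yields \eqref{eq:cov_z_exp}.

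For $\Sigma_X(t)$ I would use \eqref{eq:cov_X}, i.e.\ $\Sigma_X(t)=\begin{bmatrix}I&0\\0&M\end{bmatrix}\Sigma_Z(t)\begin{bmatrix}I&0\\0&M\end{bmatrix}$ (using $M^\top=M$), together with $Mv_0=0$, $Mv_j=v_j$ for $j\in\{1,\dots,N-1\}$, and hence $MKM=K$. Conjugation by $\begin{bmatrix}I&0\\0&M\end{bmatrix}$ leaves every $v_jv_j^*$-block with $j\ge1$ and every $K$-block unchanged while annihilating the $v_0v_0^*$-term; this removes exactly the divergent summand $\sigma^2 t\,v_0v_0^*$ and turns \eqref{eq:cov_z_exp} into \eqref{eq:cov_x_exp}. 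As in Lemma~\ref{lem:mean_t_expl}, the argument presupposes $\beta^2\mu_j\neq4\alpha^2$ for all $j\in\{1,\dots,N-1\}$ so that the denominators of $a_j,b_j,c_j$ do not vanish; the remaining cases follow by the continuity argument of Remark~\ref{rem:rid_cond} (taking limits as $\lambda_{j,1}-\lambda_{j,2}\to0$). I expect the bulk of the work to be the second paragraph: the algebra is elementary but lengthy, and one must use the reality of the two quotients carefully so that $|g_j(s)|^2$ and $|h_j(s)|^2$ reduce to the clean rational expressions displayed above rather than to cumbersome complex-conjugate products, and one must keep track of which off-diagonal block carries $b_j$ versus $\bar{b}_j$. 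Letting $t\to\infty$ and checking that the resulting $\Sigma_X(\infty)$ solves the Lyapunov equation associated with \eqref{eq:Xprocess} (cf.\ Remark~\ref{rem:lyapunov_equation}) gives a convenient independent verification.
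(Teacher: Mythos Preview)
Your proposal is correct and follows essentially the same route as the paper: compute $e^{sB}G$ from the block form \eqref{eq:exptB}, use that this matrix is real so that the transpose equals the conjugate transpose, exploit the orthonormality $v_j^*v_l=\delta_{jl}$ to kill all cross terms, integrate the resulting scalar functions, and finally conjugate by $\operatorname{diag}(I,M)$ using $Mv_0=0$, $Mv_j=v_j$ for $j\ge1$. The only cosmetic difference is that you make explicit the reality of the quotients $\frac{e^{s\lambda_{j,2}}-e^{s\lambda_{j,1}}}{\lambda_{j,2}-\lambda_{j,1}}$ and $h_j(s)$ to pass from $|\cdot|^2$ to $(\cdot)^2$, whereas the paper writes the same simplifications (e.g.\ $|e^{t\lambda_{j,2}}-e^{t\lambda_{j,1}}|^2=e^{2t\lambda_{j,2}}-2e^{-t\beta\mu_j}+e^{2t\lambda_{j,1}}$ together with $|\lambda_{j,2}-\lambda_{j,1}|^2=|\beta^2\mu_j^2-4\alpha^2\mu_j|$) without spelling out the case distinction real vs.\ complex-conjugate eigenvalues; your justification is the cleaner of the two.
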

\begin{proof}
    By \eqref{eq:exptB} we have for every $t\ge 0$ that
    \begin{equation*}
    \begin{split}
       e^{tB}G&=
\begin{bmatrix}
    v_0v_0^* & 0\\
    0 &  v_0v_0^*
\end{bmatrix}
\begin{bmatrix}
    0\\
    \sigma I
\end{bmatrix}
+
\sum_{j=1}^{N-1}
\begin{bmatrix}
   \frac{ e^{t\lambda_{j,2}}\lambda_{j,1}-e^{t\lambda_{j,1}}\lambda_{j,2}}{  \lambda_{j,1}- \lambda_{j,2}} v_jv_j^* & \frac{(e^{t\lambda_{j,2}}-e^{t\lambda_{j,1}})\kappa_j}{ \lambda_{j,2}- \lambda_{j,1}} v_jv_j^* \\
   \frac{(e^{t\lambda_{j,2}}- e^{t\lambda_{j,1}})\lambda_{j,1}\lambda_{j,2}}{ \kappa_j ( \lambda_{j,1}- \lambda_{j,2})}v_j v_j^* & 
   \frac{e^{t\lambda_{j,2}}\lambda_{j,2}-e^{t\lambda_{j,1}}\lambda_{j,1}}{ \lambda_{j,2}- \lambda_{j,1}} v_j v_j^*
\end{bmatrix}
\begin{bmatrix}
    0\\
    \sigma I
\end{bmatrix}\\
&= \sigma\left(
\begin{bmatrix}
    0\\
    v_0v_0^*
\end{bmatrix}
+
\sum_{j=1}^{N-1}
\begin{bmatrix}
     \frac{(e^{t\lambda_{j,2}}-e^{t\lambda_{j,1}})\kappa_j}{ \lambda_{j,2}- \lambda_{j,1}} v_jv_j^* \\ 
   \frac{e^{t\lambda_{j,2}}\lambda_{j,2}-e^{t\lambda_{j,1}}\lambda_{j,1}}{ \lambda_{j,2}- \lambda_{j,1}} v_j v_j^*
\end{bmatrix}
\right)
\end{split}
    \end{equation*}
This and the fact that $\B$ and $G$ are real-valued imply for every $t\ge 0$ that
\begin{equation*}
    \begin{split}
       G^\top e^{tB^\top}&=
       \left(e^{tB}G\right)^\top=
       \left(e^{tB}G\right)^*\\
&= \sigma\left(
\begin{bmatrix}
    0 &
    v_0v_0^*
\end{bmatrix}
+
\sum_{j=1}^{N-1}
\begin{bmatrix}
     \frac{(e^{t \bar{\lambda}_{j,2}}-e^{t\bar{\lambda}_{j,1}})\bar{\kappa}_j}{ \bar{\lambda}_{j,2}- \bar{\lambda}_{j,1}} v_jv_j^* & 
   \frac{e^{t\bar{\lambda}_{j,2}} \bar{\lambda}_{j,2}-e^{t\bar{\lambda}_{j,1}}\bar{\lambda}_{j,1}}{ \bar{\lambda}_{j,2}- \bar{\lambda}_{j,1}} v_j v_j^*
\end{bmatrix}
\right)
\end{split}
    \end{equation*}
Combining these two equations yields for every $t\ge 0$ that
\begin{multline*}
    e^{tB}GG^\top e^{tB^\top}
    =
    \sigma^2\left(
\begin{bmatrix}
    0\\
    v_0v_0^*
\end{bmatrix}
+
\sum_{j=1}^{N-1}
\begin{bmatrix}
     \frac{(e^{t \lambda_{j,2}}-e^{t \lambda_{j,1}})\kappa_j}{ \lambda_{j,2}- \lambda_{j,1}} v_jv_j^* \\ 
   \frac{e^{t \lambda_{j,2}}\lambda_{j,2}-e^{t\lambda_{j,1}}\lambda_{j,1}}{ \lambda_{j,2}- \lambda_{j,1}} v_j v_j^*
\end{bmatrix}
\right)\\
\cdot
\left(
\begin{bmatrix}
    0 &
    v_0v_0^*
\end{bmatrix}
+
\sum_{j=1}^{N-1}
\begin{bmatrix}
     \frac{(e^{t\bar{\lambda}_{j,2}}-e^{t\bar{\lambda}_{j,1}})\bar{\kappa}_j}{ \bar{\lambda}_{j,2}- \bar{\lambda}_{j,1}} v_jv_j^* & 
\frac{e^{t\bar{\lambda}_{j,2}}\bar{\lambda}_{j,2}-e^{t\bar{\lambda}_{j,1}}\bar{\lambda}_{j,1}}{ \bar{\lambda}_{j,2}- \bar{\lambda}_{j,1}} v_j v_j^*
\end{bmatrix}
\right).
\end{multline*}
The fact that $v_j$, $j\in \{0,1,\ldots, N-1\}$, forms an orthonormal basis of $\C^N$ further implies that
\begin{equation}\label{eq:eGGe}
    e^{tB}GG^\top e^{tB^\top}
    =
    \sigma^2\left(
\begin{bmatrix}
    0 & 0\\
    0& v_0v_0^*
\end{bmatrix}
+
\sum_{j=1}^{N-1}
\begin{bmatrix}
     \mathfrak{a}_j(t) v_jv_j^* &
     \bar{\mathfrak{b}}_j(t) v_jv_j^*
     \\ 
\mathfrak{b}_j(t)v_jv_j^*
     & 
     \mathfrak{c}_j(t) v_j v_j^*
\end{bmatrix}
\right),
\end{equation}
where
$$
\mathfrak{a}_j(t)=\left|\frac{(e^{t\lambda_{j,2}}-e^{t\lambda_{j,1}})\kappa_j}{ \lambda_{j,2}- \lambda_{j,1}}\right|^2, \quad \mathfrak{b}_j(t)=\frac{(e^{t\lambda_{j,2}}\lambda_{j,2}-e^{t\lambda_{j,1}}\lambda_{j,1})(e^{t\bar{\lambda}_{j,2}}-e^{t\bar{\lambda}_{j,1}})\bar{\kappa}_j}{ |\lambda_{j,2}- \lambda_{j,1}|^2}
$$
and
$$
\mathfrak{c}_j(t)=\left|\frac{e^{t\lambda_{j,2}}\lambda_{j,2}-e^{t\lambda_{j,1}}\lambda_{j,1}}{ \lambda_{j,2}- \lambda_{j,1}}\right|^2.
$$
Note that for every $j\in \{1,\ldots, N-1\}$ we have
$$
|\lambda_{j,2}-\lambda_{j,1}|^2=|\beta^2\mu_j^2-4\alpha^2 \mu_j| \quad \text{and} \quad |\kappa_j|^2=\mu_j.
$$
This implies that
$$
\mathfrak{a}_j(t)=\frac{|e^{t\lambda_{j,2}}-e^{t\lambda_{j,1}}|^2\mu_j}{ |\beta^2\mu_j^2-4\alpha^2 \mu_j|}=
\frac{e^{2t\lambda_{j,2}}-2e^{-t\beta \mu_j}+e^{2t\lambda_{j,1}}}{ \beta^2\mu_j-4\alpha^2 },
$$
$$
\mathfrak{b}_j(t)=\frac{(e^{t\lambda_{j,2}}\lambda_{j,2}-e^{t\lambda_{j,1}}\lambda_{j,1})(e^{t\bar{\lambda}_{j,2}}-e^{t\bar{\lambda}_{j,1}})\bar{\kappa}_j}{|\beta^2\mu_j^2-4\alpha^2 \mu_j|}
=\frac{\left[
\lambda_{j,2}(e^{2t\lambda_{j,2}}-e^{-t\beta \mu_j})+
\lambda_{j,1}(e^{2t\lambda_{j,1}}-e^{-t\beta \mu_j})\right]
\bar{\kappa}_j}{\beta^2\mu_j^2-4\alpha^2 \mu_j}
$$
and 
$$
\mathfrak{c}_j(t)=\frac{|e^{t\lambda_{j,2}}\lambda_{j,2}-e^{t\lambda_{j,1}}\lambda_{j,1}|^2}{ |\beta^2\mu_j^2-4\alpha^2 \mu_j|}
=\frac{\lambda_{j,2}^2e^{2t\lambda_{j,2}}-2\alpha^2\mu_je^{-t\beta\mu_j}+\lambda_{j,1}^2e^{2t\lambda_{j,1}}}{ \beta^2\mu_j^2-4\alpha^2 \mu_j}.
$$
Therefore we have for all $t\ge 0$, $j\in \{1,\ldots, N-1\}$
\begin{equation*}
    \begin{split}
        \int_0^t\mathfrak{a}_j(s)ds&=
\frac{
\frac{1}{2\lambda_{j,2}}(e^{2t\lambda_{j,2}}-1)
+\frac{2}{\beta \mu_j}(e^{-t\beta \mu_j}-1)
+\frac{1}{2\lambda_{j,1}}(e^{2t\lambda_{j,1}}-1)}{ \beta^2\mu_j-4\alpha^2 }\\
&=
\frac{
\beta\mu_j\lambda_{j,1}(e^{2t\lambda_{j,2}}-1)
+4\lambda_{j,1}\lambda_{j,2}(e^{-t\beta \mu_j}-1)
+\beta\mu_j\lambda_{j,2}(e^{2t\lambda_{j,1}}-1)}{2\lambda_{j,2}\lambda_{j,1}\beta\mu_j (\beta^2\mu_j-4\alpha^2) }\\
&=
\frac{
\beta\lambda_{j,1}e^{2t\lambda_{j,2}}
+4\alpha^2 e^{-t\beta \mu_j}
+\beta\lambda_{j,2}e^{2t\lambda_{j,1}}
-(\beta\lambda_{j,1}+4\alpha^2 +\beta\lambda_{j,2})
}{2\alpha^2\beta \mu_j (\beta^2\mu_j-4\alpha^2) }\\
&=
\frac{1}{2\alpha^2\beta \mu_j}+
\frac{
\beta\lambda_{j,1}e^{2t\lambda_{j,2}}
+4\alpha^2 e^{-t\beta \mu_j}
+\beta\lambda_{j,2}e^{2t\lambda_{j,1}}
}{2\alpha^2\beta \mu_j (\beta^2\mu_j-4\alpha^2) }
=\frac{1}{2\alpha^2\beta \mu_j}+a_j(t)
.
    \end{split}
\end{equation*}
Moreover, we have for all $t\ge 0$, $j\in \{1,\ldots, N-1\}$
\begin{equation*}
    \begin{split}
\int_0^t \mathfrak{b}_j(s)ds&=
\frac{\left[
\frac{1}{2}(e^{2t\lambda_{j,2}}-1)
+\frac{\lambda_{j,2}}{\beta \mu_j}(e^{-t\beta \mu_j}-1)
+\frac{1}{2}(e^{2t\lambda_{j,1}}-1)
+\frac{\lambda_{j,1}}{\beta \mu_j}(e^{-t\beta \mu_j}-1)
\right]
\bar{\kappa}_j}{\beta^2\mu_j^2-4\alpha^2 \mu_j}\\
&=
\frac{\left[
\beta \mu_j(e^{2t\lambda_{j,2}}-1)
+2\lambda_{j,2}(e^{-t\beta \mu_j}-1)
+\beta \mu_j(e^{2t\lambda_{j,1}}-1)
+2\lambda_{j,1}(e^{-t\beta \mu_j}-1)
\right]
\bar{\kappa}_j}{2\beta \mu_j(\beta^2\mu_j^2-4\alpha^2 \mu_j)}
\\
&=
\frac{\left[
\beta \mu_j(e^{2t\lambda_{j,2}}+e^{2t\lambda_{j,1}})
+2(\lambda_{j,2}+\lambda_{j,1})e^{-t\beta \mu_j}
\right]
\bar{\kappa}_j}{2\beta \mu_j(\beta^2\mu_j^2-4\alpha^2 \mu_j)}=b_j(t).
    \end{split}
\end{equation*}
Finally, we have for all $t\ge 0$, $j\in \{1,\ldots, N-1\}$
\begin{equation*}
    \begin{split}
\int_0^t \mathfrak{c}_j(s)ds&=\frac{\frac{\lambda_{j,2}}{2}(e^{2t\lambda_{j,2}}-1)+\frac{2\alpha^2}{\beta}(e^{-t\beta\mu_j}-1)+\frac{\lambda_{j,1}}{2}(e^{2t\lambda_{j,1}}-1)}{ \beta^2\mu_j^2-4\alpha^2 \mu_j}\\
&=\frac{\lambda_{j,2}\beta(e^{2t\lambda_{j,2}}-1)+4\alpha^2(e^{-t\beta\mu_j}-1)+\lambda_{j,1}\beta(e^{2t\lambda_{j,1}}-1)}{ 2\beta(\beta^2\mu_j^2-4\alpha^2 \mu_j)}
\\
&=\frac{\lambda_{j,2}\beta e^{2t\lambda_{j,2}}+4\alpha^2 e^{-t\beta\mu_j}+\lambda_{j,1}\beta e^{2t\lambda_{j,1}}-(\beta \lambda_{j,2}+4\alpha^2 +\beta \lambda_{j,1})}{ 2\beta(\beta^2\mu_j^2-4\alpha^2 \mu_j)
}
\\
&=
\frac{1}{2\beta \mu_j}+
\frac{\lambda_{j,2}\beta e^{2t\lambda_{j,2}}+4\alpha^2 e^{-t\beta\mu_j}+\lambda_{j,1}\beta e^{2t\lambda_{j,1}}}{ 2\beta(\beta^2\mu_j^2-4\alpha^2 \mu_j)
}
=\frac{1}{2\beta \mu_j}+c_j(t).
    \end{split}
\end{equation*}
Combining \eqref{eq:cov_z} and \eqref{eq:eGGe} with these equations  yields for every $t\ge 0$ that
\begin{equation}\label{eq:cov_z_exp0}
\begin{split}
    \Sigma_Z(t)&=\int_0^t e^{s\B}GG^\top e^{s\B^\top}\,ds
    = 
    \int_0^t
    \sigma^2\left(
\begin{bmatrix}
    0 & 0\\
    0& v_0v_0^*
\end{bmatrix}
+
\sum_{j=1}^{N-1}
\begin{bmatrix}
     \mathfrak{a}_j(s) v_jv_j^* &
     \overline{\mathfrak{b}}_j(s) v_jv_j^*
     \\ 
\mathfrak{b}_j(s)v_jv_j^*
     & 
     \mathfrak{c}_j(s) v_j v_j^*
\end{bmatrix}
\right)\,ds\\
&=
\sigma^2\left(
\begin{bmatrix}
    0 & 0\\
    0& v_0v_0^*t
\end{bmatrix}
+
\begin{bmatrix}
     \frac{1}{2\alpha^2 \beta} \sum_{j=1}^{N-1}\frac{v_jv_j^*}{\mu_j} &
     0
     \\ 
0
     & 
     \frac{1}{2 \beta} \sum_{j=1}^{N-1}\frac{v_jv_j^*}{\mu_j}
\end{bmatrix}
+
\sum_{j=1}^{N-1}
\begin{bmatrix}
     a_j(t) v_jv_j^* &
     \bar{b}_j(t) v_jv_j^*
     \\ 
b_j(t)v_jv_j^*
     & 
     c_j(t) v_j v_j^*
\end{bmatrix}
\right).
\end{split}
\end{equation}
Moreover, by \eqref{eq:cov_X} and the facts that $Mv_0=0$ and $Mv_j=v_j$, $j\in \{1,\ldots,N-1\}$, we obtain for every $t\ge 0$
\begin{equation}\label{eq:cov_x_exp0}
\begin{split}
\Sigma_X(t)&=
\begin{bmatrix}
    I & 0\\
    0 & M
\end{bmatrix}\Sigma_Z(t)
\begin{bmatrix}
    I & 0\\
    0 & M^\top
\end{bmatrix}
\\&=
\sigma^2\left(
\begin{bmatrix}
     \frac{1}{2\alpha^2 \beta} \sum_{j=1}^{N-1}\frac{v_jv_j^*}{\mu_j} &
     0
     \\ 
0
     & 
     \frac{1}{2 \beta} \sum_{j=1}^{N-1}\frac{v_jv_j^*}{\mu_j}
\end{bmatrix}
+
\sum_{j=1}^{N-1}
\begin{bmatrix}
     a_j(t) v_jv_j^* &
     \bar{b}_j(t) v_jv_j^*
     \\ 
b_j(t)v_jv_j^*
     & 
     c_j(t) v_j v_j^*
\end{bmatrix}
\right).
\end{split}
\end{equation}
Recalling the definition \eqref{eq:Klm1} of $K$ \eqref{eq:cov_z_exp0} and \eqref{eq:cov_x_exp0} yield \eqref{eq:cov_z_exp} and \eqref{eq:cov_x_exp}, respectively.
\end{proof}
\begin{remark}\label{rem:rid_cond2}
    Similarly to Remark \ref{rem:rid_cond}, by taking limits Proposition \ref{prop:expl_cov} also provides explicit representations of $\Sigma_Z(t)$ and $\Sigma_X(t)$ in the case where there exists $j\in \{1,2,\ldots, N-1\}$ with $\beta^2(1-\cos\left(\frac{2\pi j}{N}\right))= 2\alpha^2$. 
\end{remark}

\subsection{Limit Distribution of $X$}\label{subsec:limit_X}
In this subsection we take the limits in Lemma \ref{lem:mean_t_expl} and Proposition \ref{prop:expl_cov} to obtain an explicit representation of the limit distribution of $(X(t))_{t\ge 0}$ as $t\to \infty$. To this end, recall the definition of the matrix $
K=\sum_{j=1}^{N-1}\frac{v_jv_j^*}{\mu_j}\in \C^{\N\times \N}$ in Setting \ref{set:eigenana}.

\begin{theo}\label{thm:limitX}
The process $(X(t))_{t\ge 0}$ given by \eqref{eq:Xprocess} converges for $t\to\infty$ in distribution to a normal distribution with expectation 
$$
\mu_X(\infty)=
\begin{bmatrix}
\frac{L}{N}\mathbf{1}\\
    0
\end{bmatrix} \in \R^{2N}
$$
and covariance matrix
$$
\Sigma_X(\infty)=
\begin{bmatrix}
    \frac{\sigma^2}{2\alpha^2 \beta }K & 0\\
    0 & \frac{\sigma^2}{2 \beta }K
\end{bmatrix}\in \R^{2N\times 2N}.
$$
\end{theo}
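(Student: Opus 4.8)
The plan is to exploit that, for every $t\ge 0$, the random variable $X(t)$ is Gaussian with mean $\mu_X(t)$ and covariance $\Sigma_X(t)$, as established in Subsection~\ref{subsec:deviation}. Its characteristic function is $\xi\mapsto \exp\!\bigl(\iu\,\xi^\top \mu_X(t)-\tfrac12\xi^\top\Sigma_X(t)\xi\bigr)$, so once one shows that $\mu_X(t)\to\mu_X(\infty)$ and $\Sigma_X(t)\to\Sigma_X(\infty)$ entrywise as $t\to\infty$, L\'evy's continuity theorem gives that $X(t)$ converges in distribution to the normal law with parameters $\mu_X(\infty)$ and $\Sigma_X(\infty)$. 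The limit $\Sigma_X(\infty)$ is automatically a real symmetric positive semidefinite matrix, being an entrywise limit of covariance matrices; equivalently, $K$ in \eqref{eq:Klm1} is real because the $v_j$ occur in conjugate pairs $v_{N-j}=\bar v_j$. So the whole task reduces to taking $t\to\infty$ in the explicit formulas already available.

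First I would treat the generic case in which $\beta^2\mu_j-4\alpha^2\neq 0$ for all $j\in\{1,\dots,N-1\}$, so that Lemma~\ref{lem:mean_t_expl} and Proposition~\ref{prop:expl_cov} apply. The decisive input is Remark~\ref{rem:disc_ev}: the only eigenvalues of $B$ with vanishing real part are $\lambda_{0,1}=\lambda_{0,2}=0$, while $\Real(\lambda_{j,k})<0$ for all $j\in\{1,\dots,N-1\}$, $k\in\{1,2\}$, and moreover $-\beta\mu_j<0$ for such $j$. Hence every term $e^{t\lambda_{j,k}}$, $e^{2t\lambda_{j,k}}$, $e^{-t\beta\mu_j}$ occurring in the $j\ge 1$ summands of \eqref{eq:mean_X_t_expl} and of $a_j,b_j,c_j$ in \eqref{eq:cov_x_exp} tends to $0$ as $t\to\infty$. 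Consequently all $j\ge1$ terms in \eqref{eq:mean_X_t_expl} vanish, leaving $\mu_X(t)\to\mu_X(\infty)$, and $a_j(t)\to0$, $b_j(t)\to0$, $c_j(t)\to0$ for every $j$, leaving $\Sigma_X(t)\to\Sigma_X(\infty)$. It is worth noting that $\Sigma_X$, in contrast to $\Sigma_Z$ (cf.\ \eqref{eq:cov_z_exp}), carries no term growing like $\sigma^2 t\, v_0v_0^*$: the matrix $M$ annihilates $v_0$, i.e.\ there is no noise in the direction of the zero eigenvalue, which is precisely why the $X$-dynamics stabilize although the $Z$-dynamics do not (cf.\ Remark~\ref{rem:divergence_p}).

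The only real subtlety, which I would handle last, is the degenerate case in which $\beta^2\mu_{j_0}-4\alpha^2=0$ for some $j_0\in\{1,\dots,N-1\}$. Then Proposition~\ref{prop:Bdiagonal} fails ($B$ has a Jordan block for the repeated eigenvalue $\lambda_{j_0,1}=\lambda_{j_0,2}$), but by Remark~\ref{rem:rid_cond} and Remark~\ref{rem:rid_cond2} the expressions for $\mu_X(t)$ and $\Sigma_X(t)$ arise from \eqref{eq:mean_X_t_expl} and \eqref{eq:cov_x_exp} by passing to the limit $\lambda_{j_0,1}-\lambda_{j_0,2}\to0$; a routine limiting computation shows that the affected summands then acquire at most an extra polynomial-in-$t$ prefactor multiplying $e^{t\lambda_{j_0,k}}$ or $e^{2t\lambda_{j_0,k}}$. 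Since $\Real(\lambda_{j_0,k})=-\tfrac12\beta\mu_{j_0}<0$, these polynomially-corrected exponentials still tend to $0$, so $\mu_X(t)\to\mu_X(\infty)$ and $\Sigma_X(t)\to\Sigma_X(\infty)$ exactly as before. Combining this with the Gaussian/characteristic-function argument of the first paragraph completes the proof; as a consistency check one may additionally verify $\Sigma_X(\infty)$ via the Lyapunov equation associated to \eqref{eq:Xprocess}, cf.\ Remark~\ref{rem:lyapunov_equation}.
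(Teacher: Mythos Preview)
Your proposal is correct and follows essentially the same route as the paper's own proof: use the explicit formulas of Lemma~\ref{lem:mean_t_expl} and Proposition~\ref{prop:expl_cov} together with $\Real(\lambda_{j,k})<0$ from Remark~\ref{rem:disc_ev} to obtain $\mu_X(t)\to\mu_X(\infty)$ and $\Sigma_X(t)\to\Sigma_X(\infty)$ in the generic case, handle the degenerate parameter values via Remarks~\ref{rem:rid_cond} and~\ref{rem:rid_cond2}, and conclude by L\'evy's continuity theorem. Your treatment of the degenerate case is in fact slightly more explicit than the paper's, spelling out that the resulting polynomial-in-$t$ prefactors are still dominated by the exponential decay.
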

\begin{proof}
For all $j\in \{1,\ldots,N\}$, $k\in \{1,2\}$, we have that $\Real(\lambda_{j,k})<0$ (see also Remark \ref{rem:disc_ev}). Under the assumption that for all $j\in \{1,2,\ldots, N-1\}$ it holds that $\beta^2\bigl(1-\cos\bigl(\frac{2\pi j}{N}\bigr)\bigr)\neq 2\alpha^2$ this together with Lemma~\ref{lem:mean_t_expl} and Proposition \ref{prop:expl_cov} yields that $\lim_{t \to \infty}\mu_X(t)=\mu_X(\infty)$ and $\lim_{t \to \infty}\Sigma_X(t)=\Sigma_X(\infty)$. As outlined in Remark \ref{rem:rid_cond} and Remark \ref{rem:rid_cond2} this convergence also holds in the case where the assumption is not satisfied.

Since for every $t\ge 0$ the random variable $X(t)$ is normal with expectation vector $\mu_X(t)$ and covariance matrix $\Sigma_X(t)$ it follows with Lévy's continuity theorem that $(X(t))_{t\ge 0}$ converges weakly to a normal distribution with expectation $\mu_X(\infty)$ and covariance matrix $\Sigma_X(\infty)$ as $t\to \infty$.
\end{proof}

\begin{remark}
    By Lemma \ref{lem:mean_t_expl} also the expectation vectors $\mu_Z(t)$  of $Z(t)$ converge to 
    $$
    \mu_Z(\infty)=\begin{bmatrix}
        \frac{L}{N}\mathbf{1}\\
        \xbar{p}(0)\mathbf{1}
    \end{bmatrix}$$
    as $t\to \infty$.
    However, the covariance matrices $\Sigma_Z(t)$, $t\ge 0$, do not converge in the case $\sigma \neq 0$ and hence $(Z(t))_{t\ge 0}$ cannot converge as we have already observed in Remark \ref{rem:divergence_p}.
\end{remark}

\begin{remark}
    Note that the matrix $K$ is singular. Indeed, the fact that $v_0=\mathbf{1}$ is orthogonal to all $v_j$, $j\in \{1,\ldots,N-1\}$, implies that $Kv_0=0$. The reason for this is, as already outlined in Remark \ref{rem:average_distance}, that our system is overdetermined. For example, we have for all $t\ge 0$ that $Q_N(t)=L-\sum_{n=1}^{N-1}Q_n(t)$ and $D_N(t)=-\sum_{n=1}^{N-1}D_n(t)$. It follows that the limit distribution is degenerate. By eliminating for example the last row and last column of $K$ one obtains a non-degenerate normal distribution and can reconstruct the last components as outlined above.
\end{remark}

\begin{remark}\label{rem:lyapunov_equation}
The steady-state covariance matrix $\Sigma_X(\infty)$ is in general also characterized by the matrix Lyapunov equation associated to the system \eqref{eq:Xprocess} (see, e.g., \cite[Section 4.4.6]{gardiner1985handbook} or \cite[Section 3.7]{pavliotis2014stochastic}). In our setting this equation reads
$$
\B\Sigma_X(\infty)+\Sigma_X(\infty)\B^\top=-\sigma^2\begin{bmatrix}
    0\\M
\end{bmatrix}
\begin{bmatrix}
    0 & M^\top
\end{bmatrix}.
$$
Using the representation of $\Sigma_X(\infty)$ from Theorem \ref{thm:limitX} this is equivalent to
$$
\begin{bmatrix}0&\A\\-\alpha^2\A^\top &-\beta \A^\top \A \end{bmatrix}
\begin{bmatrix}
    \frac{\sigma^2}{2\alpha^2 \beta}K & 0\\
    0 & \frac{\sigma^2}{2 \beta}K
\end{bmatrix}+
\begin{bmatrix}
    \frac{\sigma^2}{2\alpha^2 \beta}K & 0\\
    0 & \frac{\sigma^2}{2 \beta}K
\end{bmatrix}
\begin{bmatrix}0&-\alpha^2\A\\ \A^\top &-\beta \A^\top \A \end{bmatrix}
=-\sigma^2\begin{bmatrix}
    0\\M
\end{bmatrix}
\begin{bmatrix}
    0 & M
\end{bmatrix}
$$
and hence to
$$
\begin{bmatrix}
0 & \frac{\sigma^2}{2\beta}(\A K-K \A)\\
\frac{\sigma^2}{2\beta}(K \A^\top-\A^\top K)& -\frac{\sigma^2}{2}(\A^\top \A K+K \A^\top \A)
\end{bmatrix}
=-\sigma^2\begin{bmatrix}
    0 & 0\\ 0 & M^2
\end{bmatrix}.
$$
Next, note that by Lemma \ref{lem:evA} we have for all $j\in \{1,\ldots,N-1\}$ that $Av_j=\kappa_jv_j$ and $A^\top v_j=\bar \kappa_j v_j$. This implies
$$
\A K=\sum_{j=1}^{N-1}\frac{(Av_j)v_j^*}{\mu_j}=\sum_{j=1}^{N-1}\frac{(\kappa_j v_j)v_j^*}{\mu_j}=\sum_{j=1}^{N-1}\frac{v_j(\bar \kappa_j v_j)^*}{\mu_j}=\sum_{j=1}^{N-1}\frac{v_j(A^\top v_j)^*}{\mu_j}=K \A.
$$
Similarly, we obtain $K \A^\top=\A^\top K$. Again by Lemma \ref{lem:evA} we have for all $j\in \{1,\ldots,N-1\}$ that $\kappa_j \bar \kappa_j=\mu_j$. This implies that
\begin{equation*}
   \begin{split}
       \A^\top \A K+K \A^\top \A&=
\sum_{j=1}^{N-1}\frac{A^\top (Av_j)v_j^*}{\mu_j}+\frac{v_j(Av_j)^*A}{\mu_j}=
\sum_{j=1}^{N-1}\frac{\kappa_j (A^\top v_j)v_j^*}{\mu_j}+\frac{\bar \kappa_j v_j(A^\top v_j)^*}{\mu_j}\\
&=2\sum_{j=1}^{N-1}v_j v_j^*=2M.
   \end{split} 
\end{equation*}
Hence, we see that $\Sigma_X(\infty)$ indeed satisfies the matrix Lyapunov equation. 

\end{remark}

Before discussing Theorem~\ref{thm:limitX} in detail, we derive
in the next result a closed form representation of the matrix $K$. 
To do so, we realize that the entries of $K$ are instances of so-called \textit{Dowker’s sums in twisted form}
(the diagonal elements of $K$ have the untwisted form) \cite{cvijovic2007closed}.
This family of cosecant sums 
having closed forms that involve higher-order Bernoulli polynomials
was first presented
in a series of papers by Dowker 
describing his theory 
 of the Casimir effect \cite{dowker1987casimir}, asymptotic
expansion of the integrated heat kernel on cones \cite{dowker1989heat}, and in connection 
with the celebrated Verlinde’s formula for the dimensions of vector bundles on moduli spaces \cite{dowker1992verlinde}.

\begin{propo}\label{prop:exp_K}
For all $l,m\in \{1,\ldots,N\}$ the $(l,m)$-entry of $K$ satisfies
\begin{equation}\label{eq:K_final}
    K_{l,m}=\frac{1}{2N}
    \biggl[
    (l-m)^2-N|l-m|+ \frac{N^2-1}{6}\biggr].
\end{equation}
\end{propo}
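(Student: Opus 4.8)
The plan is not to evaluate the cosecant sum \eqref{eq:Klm1} directly, but to recognise $K$ as (essentially) the Moore--Penrose pseudoinverse of the cycle Laplacian $\A^\top\A$ and then verify that the right-hand side of \eqref{eq:K_final} defines the same matrix. First I would record three properties of $K$ that are immediate from \eqref{eq:Klm1} together with Lemma~\ref{lem:evA}: $K$ is Hermitian (in fact real, since pairing the index $j$ with $N-j$ turns the summand $v_jv_j^*/\mu_j$ into a real cosine); $K\mathbf{1}=0$, because $\mathbf{1}=\sqrt{N}\,v_0$ and $v_0\perp v_j$ for $j\ge 1$; and, using $\A^\top\A=\sum_{j=0}^{N-1}\mu_j v_jv_j^*$ and completeness of $(v_j)_{j=0}^{N-1}$,
\begin{equation*}
  \A^\top\A\,K=\sum_{j=1}^{N-1}v_jv_j^*=I-v_0v_0^*=I-\tfrac1N\mathbf{1}\mathbf{1}^\top .
\end{equation*}
These three properties determine $K$ uniquely: if $\widetilde K$ is any real symmetric matrix with $\widetilde K\mathbf{1}=0$ and $\A^\top\A\widetilde K=I-\tfrac1N\mathbf{1}\mathbf{1}^\top$, then for $j\ge 1$ one has $\A^\top\A\widetilde Kv_j=v_j$ and $\widetilde Kv_j\perp\mathbf{1}$ (the latter from $\mathbf{1}^\top\widetilde Kv_j=(\widetilde K\mathbf{1})^\top v_j=0$), and since $\A^\top\A$ restricts to a bijection of $\mathbf{1}^\perp$ with the $v_j$ as eigenvectors of eigenvalues $\mu_j>0$, this forces $\widetilde Kv_j=v_j/\mu_j=Kv_j$; together with $\widetilde Kv_0=0=Kv_0$ and the basis property of $(v_j)$ we get $\widetilde K=K$.

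It then remains to check these three properties for the matrix $\widetilde K$ given by the right-hand side of \eqref{eq:K_final}. Writing $g(r)=r^2-N|r|+\tfrac{N^2-1}{6}$ and $\widetilde f(r)=\tfrac1{2N}g(r)$, symmetry of $\widetilde K$ is clear, and the identity $g(r)=g(N-r)$ for $0<r<N$ shows that $\widetilde K_{l,m}$ depends only on $(l-m)\bmod N$, i.e.\ $\widetilde K$ is circulant with symbol $\widetilde f$ on $\Z/N\Z$. The condition $\widetilde K\mathbf{1}=0$ then amounts to $\sum_{r=0}^{N-1}g(r)=0$, which follows from the elementary formulas $\sum_{r=0}^{N-1}r=\tfrac{N(N-1)}{2}$ and $\sum_{r=0}^{N-1}r^2=\tfrac{N(N-1)(2N-1)}{6}$ after factoring out $\tfrac{N(N-1)}{6}$. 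For the Laplacian condition, the product of the circulant matrices $\A^\top\A$ (symbol $c$ with $c(0)=2$, $c(1)=c(N-1)=-1$) and $\widetilde K$ is again circulant, with symbol $r\mapsto 2\widetilde f(r)-\widetilde f(r-1)-\widetilde f(r+1)$ (arguments mod $N$); I would evaluate this symbol, obtaining $-\tfrac1N$ for every $r\in\{1,2,\dots,N-1\}$ — for generic $r$ the quadratic part of $g$ contributes $-2$ while the linear and constant parts cancel, and the boundary values $r=1$, $r=N-1$ use $g(0)=\tfrac{N^2-1}{6}$ and $g(N-1)=g(1)$ — and $1-\tfrac1N$ for $r=0$. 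This is exactly the symbol of $I-\tfrac1N\mathbf{1}\mathbf{1}^\top$, so $\A^\top\A\widetilde K=I-\tfrac1N\mathbf{1}\mathbf{1}^\top$ and hence $\widetilde K=K$, which is \eqref{eq:K_final}.

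The only genuinely delicate point is the small case analysis for the Laplacian symbol near $r=0$: because the term $-N|r|$ in $g$ fails to be locally affine precisely at $r\equiv 0$, the cases $r=0$ and $r=\pm1$ must be handled separately, and it is exactly this kink that produces the Kronecker-delta (diagonal) part of $I-\tfrac1N\mathbf{1}\mathbf{1}^\top$; the periodic wrap-around then makes $r=1$ and $r=N-1$ behave identically, which is a convenient consistency check. An alternative route that I would mention but not carry out is to reduce $K_{l,m}$ to the twisted cosecant sum $\tfrac1{4N}\sum_{j=1}^{N-1}\cos\!\bigl(\tfrac{2\pi(l-m)j}{N}\bigr)\csc^2\!\bigl(\tfrac{\pi j}{N}\bigr)$ and quote its closed form from \cite{cvijovic2007closed}; the linear-algebra verification above is in effect a self-contained proof of that special instance of Dowker's formula.
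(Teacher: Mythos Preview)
Your proof is correct and takes a genuinely different route from the paper's. The paper computes the $(l,m)$-entry of $K$ directly as the trigonometric sum $\frac{1}{4N}\sum_{j=1}^{N-1}\cos\bigl(\tfrac{2\pi j(l-m)}{N}\bigr)\csc^2\bigl(\tfrac{\pi j}{N}\bigr)$, recognises it as an instance of Dowker's cosecant sum $C_2(N,l-m)$, and then quotes the closed form from \cite{Cvijovic2012} in terms of higher-order Bernoulli polynomials and Bernoulli numbers, which are finally evaluated to produce \eqref{eq:K_final}. You instead characterise $K$ abstractly as the unique real symmetric matrix with $K\mathbf{1}=0$ and $\A^\top\A\,K=I-\tfrac1N\mathbf{1}\mathbf{1}^\top$ (i.e.\ the Moore--Penrose pseudoinverse of the cycle Laplacian), and then verify by a discrete second-difference computation that the right-hand side of \eqref{eq:K_final} satisfies these properties. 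The paper's approach situates the identity within a classical family of special-function sums; yours is fully self-contained, requires no external references, and makes transparent why the formula is quadratic in $l-m$ (it is the Green's function of the second-difference operator on $\Z/N\Z$). As you note in your closing remark, your verification is in effect an elementary proof of the $n=1$ case of Dowker's formula.
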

\begin{proof}
    First note that for $j\in \{1,\dots, N-1\}$ and $l,m\in\{1,\dots, N\}$ the $(l,m)$-entry of the matrix $v_jv_j^*$ is given by
\begin{equation*}
   (v_jv_j^*)_{l,m}=v_{j,l}\bar{v}_{j,m}=\frac{1}{N}\omega^{(l-1)j}\overline{\omega^{(m-1)j}}=\frac{1}{N}e^{\frac{2\pi\iu j (l-m)}{N}}.
\end{equation*}
This implies that for all $l,m\in\{1,\ldots, N\}$ the $(l,m)$-entry $K$ satisfies
\begin{equation}\label{eq:Klm}
   K_{l,m}=\frac{1}{N}\sum_{j=1}^{N-1}\frac{1}{\mu_j}e^{\frac{2\pi\iu j (l-m)}{N}}
   =\frac{1}{N}\sum_{j=1}^{N-1}\frac{\cos\left(\frac{2\pi j (l-m)}{N}\right) + \iu\sin \left(\frac{2\pi j (l-m)}{N}\right)}{2-2\cos\left(\frac{2\pi j}{N} \right)}.
\end{equation}
The symmetries of $\sin$ and $\cos$ then imply that
\begin{equation}\label{eq:Klm2}
    K_{l,m}=\frac{1}{2N}\sum_{j=1}^{N-1}\frac{\cos\Bigl(\frac{2\pi j (l-m)}{N}\Bigr)}{1-\cos\Bigl(\frac{2\pi j}{N} \Bigr)}, \qquad l,m \in \{1,\dots,N\}. 
\end{equation}
From this we see that $K$ is symmetric and therefore we fix w.l.o.g.\ $l,m \in \{1,\dots,N\}$ with $l\ge m$ in the sequel.
Using that for all $j\in \{1,\ldots,N-1\}$ we have
$$
2-2\cos\Bigl(\frac{2\pi j}{N} \Bigr)=4\sin^2\Bigl(\frac{\pi j}{N} \Bigr)=\frac{4}{\csc^2\Bigl(\frac{\pi j}{N} \Bigr)}
$$ 
we rewrite \eqref{eq:Klm2} in the form
\begin{equation}\label{eq:Klm3}
    K_{l,m}=\frac{1}{4N}\sum_{j=1}^{N-1}
    \cos\Bigl(\frac{2\pi j (l-m)}{N}\Bigr)
    \csc^2\Bigl(\frac{\pi j}{N} \Bigr).
\end{equation}
This is a special case of \textit{Dowker's general family of cosecant sums} \cite[Section~4]{da2018generalized}
\begin{equation*}
   C_{2n}(N,r):=\sum_{j=1}^{N-1}
  \cos\Bigl(\frac{2\pi j r}{N}\Bigr)
   \csc^{2n}\Bigl(\frac{\pi j}{N} \Bigr),
\end{equation*}
where $n$ is a positive integer, $N$ is any integer except 1, and $r$ is an integer between 0 and $N-1$.
Here, we have to consider $C_2(N,l-m)$.
Using
\cite[Eq.~(2.3)]{Cvijovic2012} we can rewrite \eqref{eq:Klm3} as
\begin{equation}\label{eq:Klm4}
\begin{split}
    K_{l,m}&=\frac{2}{4N}
    \sum_{k=0}^1\binom{2}{2k}B_{2k}\Bigl(\frac{l-m}{N}\Bigr)
    B_{2-2k}^{(2)}(1)N^{2k} \\
    &=\frac{1}{2N}
    \biggl[B_0\Bigl(\frac{l-m}{N}\Bigr)
    B_2^{(2)}(1)+B_2\Bigl(\frac{l-m}{N}\Bigr)
    B_0^{(2)}(1)N^2\biggr].
    \end{split}
\end{equation}
Here, $B_k(x)$ are the ordinary Bernoulli polynomials
and $B_k^{(2)}(x)$
denote the Bernoulli polynomials of order 2 and degree $k$ 
defined by
\begin{equation*}
    B_k^{(2)}(x)=\sum_{\alpha=0}^k\binom{k}{\alpha}
    B_\alpha^{(2)}\,x^{k-\alpha}.
\end{equation*}
This gives
\begin{equation}\label{eq:Klm5}
\begin{split}
    K_{l,m}
    &=\frac{1}{2N}
    \biggl[
    B_2^{(2)}(1)+\Bigl(\Bigl(\frac{l-m}{N}\Bigr)^2-\Bigl(\frac{l-m}{N}\Bigr)+\frac{1}{6}\Bigr)
    B_0^{(2)}(1)N^2\biggr]\\
    &=\frac{1}{2N}
    \biggl[
    B_0^{(2)}+2B_1^{(2)}+B_2^{(2)}+\Bigl(\Bigl(\frac{l-m}{N}\Bigr)^2-\Bigl(\frac{l-m}{N}\Bigr)+\frac{1}{6}\Bigr)
    B_0^{(2)}N^2\biggr].
\end{split}
\end{equation} 
$B_k^{(2)}$ denote the Bernoulli numbers of order 2 and degree $k$ given by the generating function \cite{carlitz1952some}
\begin{equation*}
  \sum_{k=0}^\infty\frac{t^k}{k!}B_k^{(2)}
  = \Bigl(\frac{t}{e^t-1}\Bigr)^2
  = 1-t+ \frac{5}{12} t^2 -\frac{1}{12} t^3+O(t^4).
\end{equation*}
Thus, we obtain
\begin{equation}\label{eq:Klm7}
\begin{split}
       K_{l,m}
    &=\frac{1}{2N}
    \biggl[
    1-2+\frac{5}{6}+\Bigl(\Bigl(\frac{l-m}{N}\Bigr)^2-\Bigl(\frac{l-m}{N}\Bigr)+\frac{1}{6}\Bigr)
    N^2\biggr]\\
    &=\frac{1}{2N}
    \biggl[
    -\frac{1}{6}+\Bigl(\Bigl(\frac{l-m}{N}\Bigr)^2-\Bigl(\frac{l-m}{N}\Bigr)+\frac{1}{6}\Bigr)
    N^2\biggr]\\
    &=\frac{1}{2N}
    \biggl[
    (l-m)^2-N(l-m)+ \frac{N^2-1}{6}\biggr].
\end{split} 
\end{equation}
This yields \eqref{eq:K_final} and completes the proof.
\end{proof}

Theorem \ref{thm:limitX} and Proposition \ref{prop:exp_K} show that the limit distribution of $(X(t))_{t\ge 0}$ is Gaussian with expectation vector and covariance matrix 
\begin{equation*}
\mu_X(\infty)=
\begin{bmatrix}
\frac{L}{N}\mathbf{1}\\
    0
\end{bmatrix} \in \R^{2N}, \qquad
\Sigma_X(\infty)=
\begin{bmatrix}
    \frac{\sigma^2}{2\alpha^2 \beta }K & 0\\
    0 & \frac{\sigma^2}{2 \beta }K
\end{bmatrix}\in \R^{2N\times 2N},
\end{equation*}
where the matrix $K=K(N)$ only depends on $N$ and satisfies
\begin{equation}\label{eq:K_final2}
    K_{l,m}=\frac{1}{2N}
    \biggl[
    (l-m)^2-N|l-m|+ \frac{N^2-1}{6}\biggr].
\end{equation}
We denote by $X(\infty)=(Q(\infty),D(\infty))^\top \in\R^{2N}$ a random vector with this distribution. 
Then we see that in the limit $t\to \infty$
\begin{itemize}
    \item the agents' positions are distributed equidistantly in expectation,
    \item the deviations $D_n(\infty)$, $n\in \{1,\ldots,N\}$, from the ensemble's mean velocity are zero in expectation,
    \item the covariance of deviations $D_n(\infty)$, $n\in \{1,\ldots,N\}$, from the ensemble's mean velocity are independent of $\alpha$ and proportional to $\frac{1}{\beta}$,
    \item the covariance of the distances $Q_n(\infty)$, $n\in \{1,\ldots,N\}$, is equal to $\frac{1}{\alpha^2}$ times the covariance of the deviations from the mean velocity and
    \item the covariances between distances $Q_n(\infty)$, $n\in \{1,\ldots,N\}$, and velocity deviations $D_n(\infty)$, $n\in \{1,\ldots,N\}$, are zero, and hence independent (since they are Gaussian).
\end{itemize}

The matrix $K$ is circulant. This reflects the interchangeability of the agents and the symmetry of the problem. To further discuss the implications of \eqref{eq:K_final2}, we take the first agent as representative agent and only consider the first row of $K$. Let $c(x)=\frac{1}{2N}
  [(x-1)^2-N(x-1)+ \frac{N^2-1}{6}]$, $x\in \R$. Then $(c(m))_{m=1}^{N}$ describes the first row of $K$ and thus the block covariance matrices in \eqref{eq:K_final2}. In particular, the vector $(c(m))_{m=1}^{N}$ specifies up to the factor $\frac{\sigma^2}{2\alpha^2 \beta}$ the covariances between the distance $Q_1(\infty)$ between the first and the second agent and the distances $Q_n(\infty)$, $n\in \{1,\ldots,N\}$, between the other neighboring agents. Moreover, it also describes up to the factor $\frac{\sigma^2}{2 \beta}$ the covariances between the first agent's deviation $D_1(\infty)$ from the ensemble's mean velocity with the velocity deviations $D_n(\infty)$, $n\in \{1,\ldots,N\}$, of the other agents.
  Note that 
    $c(x)=\frac{1}{2N}
  [(x-(\frac{N}{2}+1))^2-\frac{N^2+2}{12}]$. So $c$ describes a convex parabola with vertex at $(\frac{N}{2}+1,-\frac{N^2+2}{24N})$. In particular, $c(1)=\frac{N^2-1}{12N}$ -- the variance of the first agent's coordinates -- is the maximal entry of $(c(m))_{m=1}^{N}$. As $m$ increases, $c(m)$ first decreases, crosses zero before it reaches its minimal negative value at $m=\lfloor \frac{N}{2}+1\rfloor$ and $m=\lceil \frac{N}{2}+1\rceil$ (which are the same if $N$ is even).   
  Note that agents $m=\lfloor \frac{N}{2}+1\rfloor$ and $m=\lceil \frac{N}{2}+1\rceil$ are the agents with the largest distance to the first agent. 
  As $m$ further increases, $c(m)$ increases as well.
  This discussion implies that in the steady state there is with high probability maximally one wave/cluster of agents. For example, if $Q_1$ is smaller than the expectation $L/N$, then the immediate neighbors (with large probability) 
also have a small distances. 
The more distant agents (in particular $m=\lfloor \frac{N}{2}+1\rfloor$ and $m=\lceil \frac{N}{2}+1\rceil$) have negative correlation and thus large distances with higher probability. This means roughly speaking that the agents agglomerate around the first agent.
A similar observation applies to the velocity coordinates. The entries of $K$ are depicted in Figure~\ref{fig:0}.

\begin{figure}
    \centering
    \includegraphics{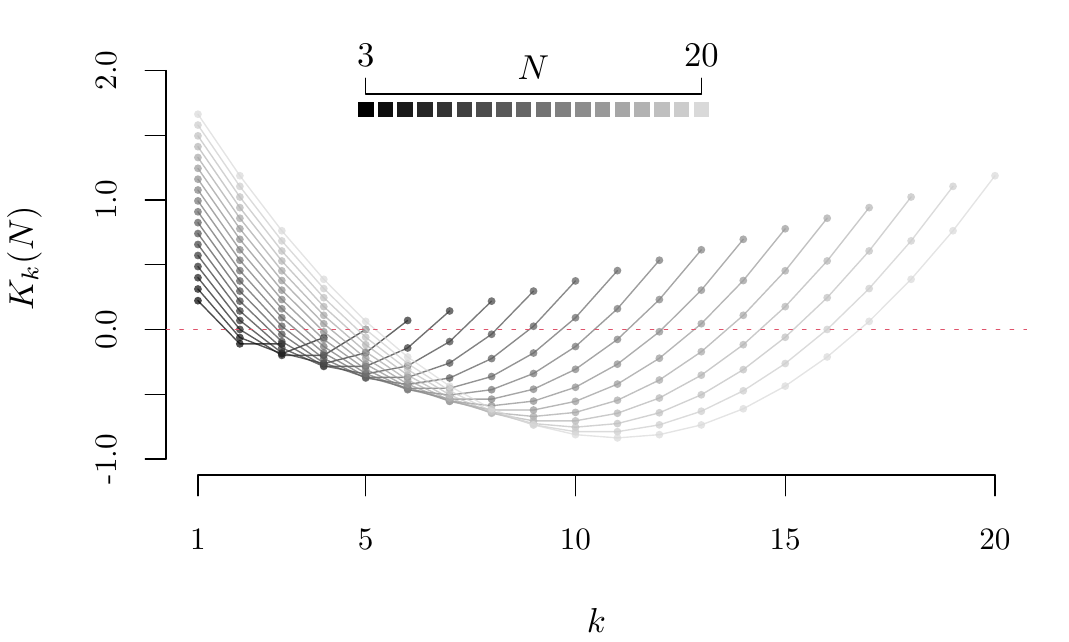}
    \caption{Plot of the function $K(N):k\mapsto K_k(N)=\frac{1}{2N}
    \biggl[
    (k-1)^2-N(k-1)+ \frac{N^2-1}{6}\biggr]$ 
    for $k\in\{1,\ldots,N\}$ and $N\in\{3,4,\ldots,20\}$.
    }
    \label{fig:0}
\end{figure}

\section{Numerical Experiments}\label{sec:simulation}

We present in this section some simulation results of agents on a segment with periodic boundaries. The code can be downloaded and the simulations can be computed in real time on the online platform at \href{https://www.vzu.uni-wuppertal.de/fileadmin/site/vzu/Simulating_Collective_Motion.html?speed=0.7}{\texttt{https://www.vzu.uni-wuppertal.de/fileadmin/site/vzu/Simulating\_Collective\_Motion.html}}.

\subsection{Simulation Setup}
We simulate the trajectories of $N=20$ agents on a segment of length $L=501$ with periodic boundary conditions. 
The initial condition is uniform with velocity zero, i.e.\ 
\begin{equation*}
    Q_n(0)=L/N\quad\text{and}\quad p_n(0)=0,\qquad n=\{0,\ldots,N\}.
\end{equation*}

\paragraph{Numerical solver}
The simulations are computed using an implicit/explicit Euler-Maruyama scheme with time step $\delta t$.  
We denote in the following the system actualisation by $t_k=k\delta t$, $k\in\{0,1,2,\ldots\}$.  
The numerical scheme for the $n$-th agent at time $t_k$ is given by
\begin{equation}\label{eq:num_modn}
    \begin{cases}
        q_n(t_{k+1})&=q_n(t_k)+\delta t\, p_n(t_{k+1}),\\
        p_n(t_{k+1})&=p_n(t_k)+\bigl(U'(Q_n(t_k))-U'(Q_{n-1}(t_k))\bigr)\,\delta t\\
        &\qquad+\;\beta\bigl(p_{n+1}(t_k)-2p_n(t_k)+p_{n-1}(t_k)\bigr)\,\delta t+\sigma\xi_n(k)\,\sqrt{\delta t}, 
    \end{cases}
\end{equation}
with $(\xi_n(k))_{n=0}^N$, $k\in\{0,1,2,\ldots\}$, independent one-dimensional standard normal random variables. 
Note that different simulation schemes for deterministic port-Hamiltonian pedestrian models are compared in \cite{tordeux2022multi}. 
The implicit/explicit Euler schemes prove to be efficient solvers.
In the following, we set the time step to $\delta t=0.001$, which seems to be a good compromise between accurate numerical approximations and reasonable run times.

\paragraph{Parameters' setting} 
The parameters' settings are as follows. 
The dissipation rate and the noise volatility are set to one
\begin{equation*}
  \beta=1\qquad\text{and}\qquad\sigma=1, 
\end{equation*}
while the potential $U\colon \R \to [0,\infty)$ is given by
\begin{equation*}
    U(x)=\frac{1}{\kappa} (\alpha |x|)^\kappa, \qquad x\in \R
\end{equation*}
for some fixed $\alpha\in (0,\infty)$ and $\kappa\in (1,\infty)$.
Note that $U$ is a convex function with derivative $U'(x)=\sgn(x)\alpha^\kappa |x|^{\kappa-1}$.

\paragraph{Simulation scenario}
In the motion model \eqref{eq:modn}, the parameters $\alpha$ and $\kappa$ control the distances while the parameter $\beta$ controls the relative velocities with the neighbors.
Three simulation scenarios are examined in the following,
focusing on the role of the parameters $\alpha$ and $\kappa$. 
We consider the quadratic potential for which $\kappa=2$ with $\alpha=0.1$ and $\alpha=1$ for the first two scenarios, respectively.
The system in these cases is the linear Ornstein-Uhlenbeck process described in Section~\ref{sec:OUprocess}. 
We further analyze the case of a non-linear distance-based interaction term with $\kappa=4$ and $\alpha=1$ in a final simulation scenario. 
\medskip

In the following, we focus on the emergence of collective motions in the three scenarios before analyzing the autocorrelation and the distributions of the ensemble's velocity and distance variances for long simulation times.

\subsection{Simulating Collective Motion}

Thanks to the telescopic form of model, the ensemble mean velocity of the agents $(\xbar{p}(t))_{t\ge 0}$ is a Brownian motion with variance $\sigma^2/N$, see Remark~\ref{rem:average_velocity}. 
As time progresses, the ensemble mean velocity fluctuates. 
In parallel, the distributions of the ensemble variances of the agents' velocities and agents' distances converge to finite settings, see Theorem~\ref{thm:limitX}.
The divergence of the ensemble mean velocity coupled to the convergence of ensemble variances of the agents' velocities and agents' distances trigger the agents to move in a coordinated manner.
Note that, the deterministic system with $\sigma=0$ is stable and systematically converges to an uniform equilibrium solution for which $p_n(\infty)=\xbar{p}_0$ and $Q_n(\infty)=L/N$ for all $n\in\{1,\ldots,N\}$. 
The coordination of the dynamics is purely noise-induced and would vanish if the stochastic perturbations disappear. 

The next figures present the evolution of the system in the three scenarios over the first $K_\text{max}=500\,000$ simulation steps. 
The figures consist of three panels:
\begin{itemize}
    \item Top panel: the agents' trajectories $(q_n(t_k))_{n=1}^N$ for $k\in \{1,\ldots, K_{max}\}$.
    \item Central panel: the velocity of the first agent $p_1(t_k)$, the mean velocity
    \begin{equation*}
        \xbar{p}(t_k)=\frac{1}{N}\sum_{n=1}^N p_n(t_k),
    \end{equation*}  
    and the ensemble variance of velocities
    \begin{equation*}
        V_p(t_k)=\frac{1}{N}\sum_{n=1}^N \bigl(p_n(t_k)-\xbar{p}(t_k)\bigr)^2=\frac{1}{N}\|Mp(t_k)\|^2=\frac{1}{N}\|D(t_k)\|^2,
    \end{equation*}
    for $k\in \{1,\ldots, K_{max}\}$. 
    $M$ being the matrix introduced in \eqref{eq:def_M} and $D$ the process defined in \eqref{eq:def_D}.
    \item Bottom panel: the distance of the first agent $Q_1(t_k)$ minus the mean distance $L/N$ and the ensemble variance of distances 
    \begin{equation*}
        V_Q(t_k)=\frac{1}{N}\sum_{n=1}^N \bigl(Q_n(t_k)-L/N\bigr)^2=\frac{1}{N}\|MQ(t_k)\|^2=\frac{1}{N}\|E(t_k)\|^2
    \end{equation*}   
    for $k\in \{1,\dots, K_{max}\}$. 
    Here we introduced the deviation from the mean distance process $E=MQ(t)$, $t\ge 0$.
\end{itemize} 
The same random kernel is used for all three simulation scenarios. 
Since the mean velocity is independent of the $\alpha$, $\kappa$, and $\beta$ parameters, it can be used as an invariant reference for the scenarios. 

The ensemble mean velocity describes a random walk, tending to be positive for the first simulation times (i.e., up to approximately $k=200\,000$) before fluctuating towards negative values (see Figures~\ref{fig:1}, \ref{fig:2}, or \ref{fig:3}, the black curve in the middle panel). 
Correspondingly, the agents appear to move collectively to the left before moving in a coordinated manner to the right (see the agent trajectories in Figures~\ref{fig:1}, \ref{fig:2}, and \ref{fig:3}, top panels, the blue trajectory is that of the first vehicle). 
However, the distances are distended and fluctuating in the first scenario with $\alpha=0.1$ and $\kappa=2$. 
The trajectories allow for the appearance of jamming waves and large distance fluctuations (see Figure~\ref{fig:1}, top panel). 

\begin{figure}[!ht]
    \centering\medskip
    \includegraphics[width=\textwidth]{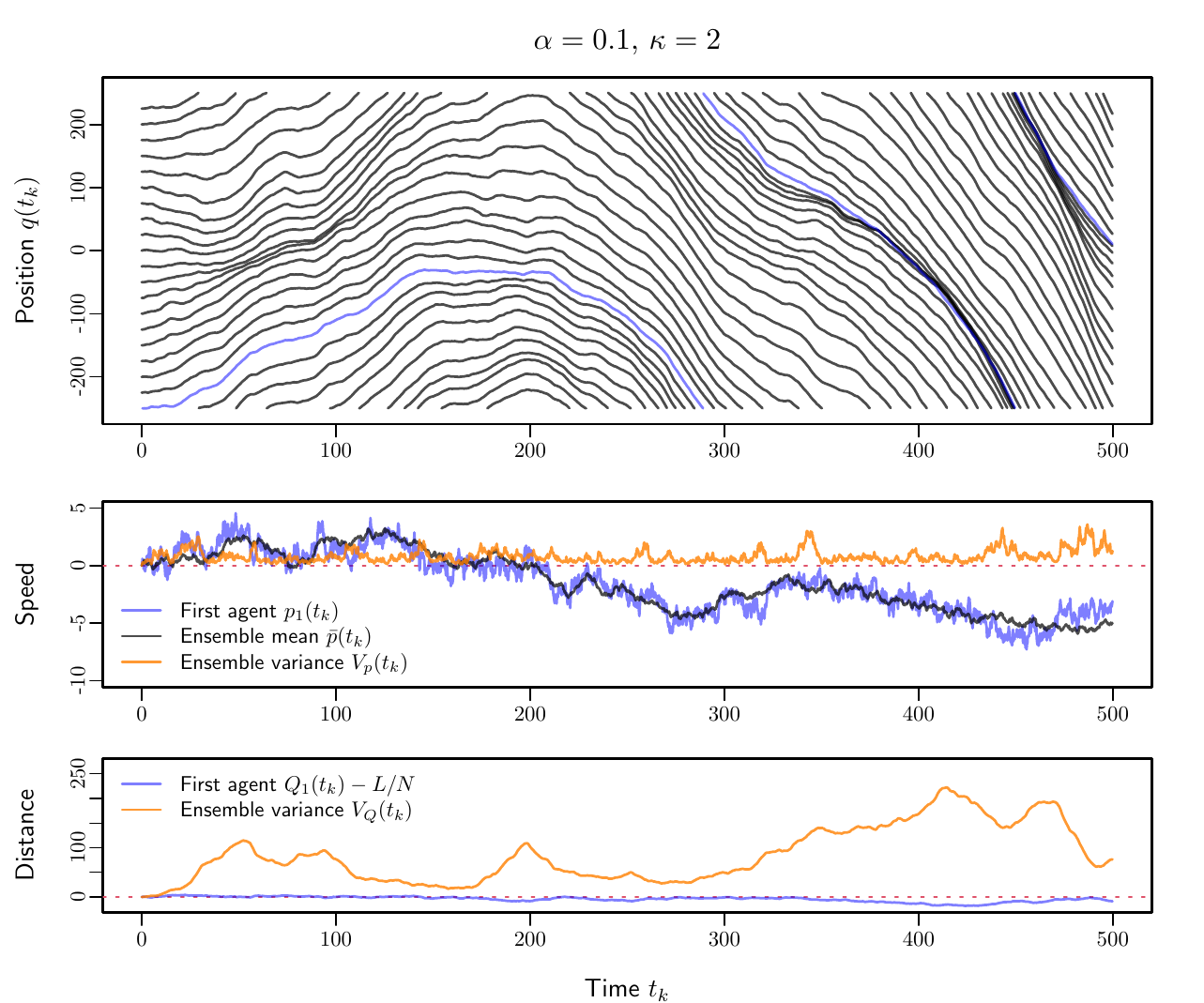}\medskip
    \caption{First simulation scenario with $\alpha=0.1$ and $\kappa=2$ (Ornstein-Uhlenbeck process). Top panel: Agents' trajectories. Middle panel: Agents' velocity features. Bottom panel: Agents' distance features. The agents appear to move to the left before moving in a coordinated manner to the right (top panel). Indeed, the mean velocity, being a Brownian motion, fluctuates towards positive values before visiting negative values (middle panel). The ensemble variance of the agents' velocities seems to become quickly stationary. However, the dynamics are disordered and even show jamming waves. The ensemble variance of the agents' distance presents large fluctuations (bottom panel).}
    \label{fig:1}
\end{figure}

In contrast, the trajectories appear more uniform in the second scenario where the parameter $\alpha=1$, accounting for the distance, is larger (see  Figure~\ref{fig:2}, top panel). 
The trajectories are again more regular in the third simulation scenario with $\alpha=1$ and $\kappa=4$. 
The system is no longer an Ornstein-Uhlenbeck process in this scenario since the distance-based interaction term $U'(Q)=\alpha^4Q^3$ is no longer linear. 
The hard regulation of the distance induced by the nonlinearity of the interaction term makes the trajectories almost equi-distant, although fluctuating according to the Brownian motion of the mean velocity. 
Indeed, the ensemble variance of the agents' distance presents large fluctuations in the first scenario, while it is reduced in the second scenario and close to zero in the third scenario (see, respectively, Figures~\ref{fig:1}, \ref{fig:2}, and \ref{fig:3}, bottom panels).

\begin{figure}[!ht]
    \centering\medskip
    \includegraphics[width=\textwidth]{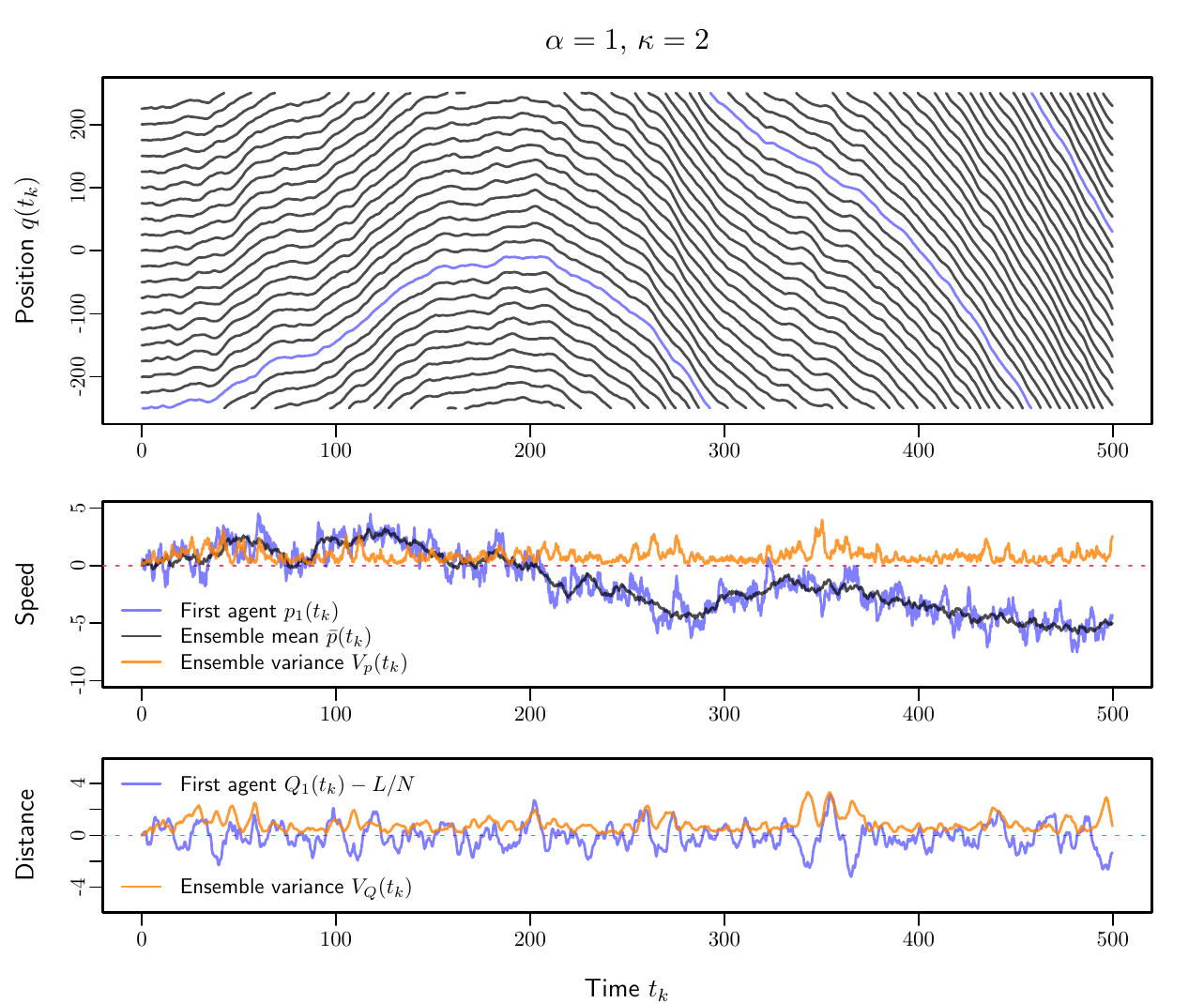}\medskip
    \caption{Second simulation scenario with $\alpha=1$ and $\kappa=2$ (Ornstein-Uhlenbeck process). Top panel: Agents' trajectories. Middle panel: Agents' velocity features. Bottom panel: Agents' distance features. The collective motion of the agents is more uniform when $\alpha=1$ than when $\alpha=0.1$ (compare with the first scenario Figure~\protect\ref{fig:1}). The variability of the distance is much more reduced for $\alpha=1$, but the variability of the velocity shows qualitatively comparable characteristics.}
    \label{fig:2}
\end{figure}

\begin{figure}[!ht]
    \centering\medskip
    \includegraphics[width=\textwidth]{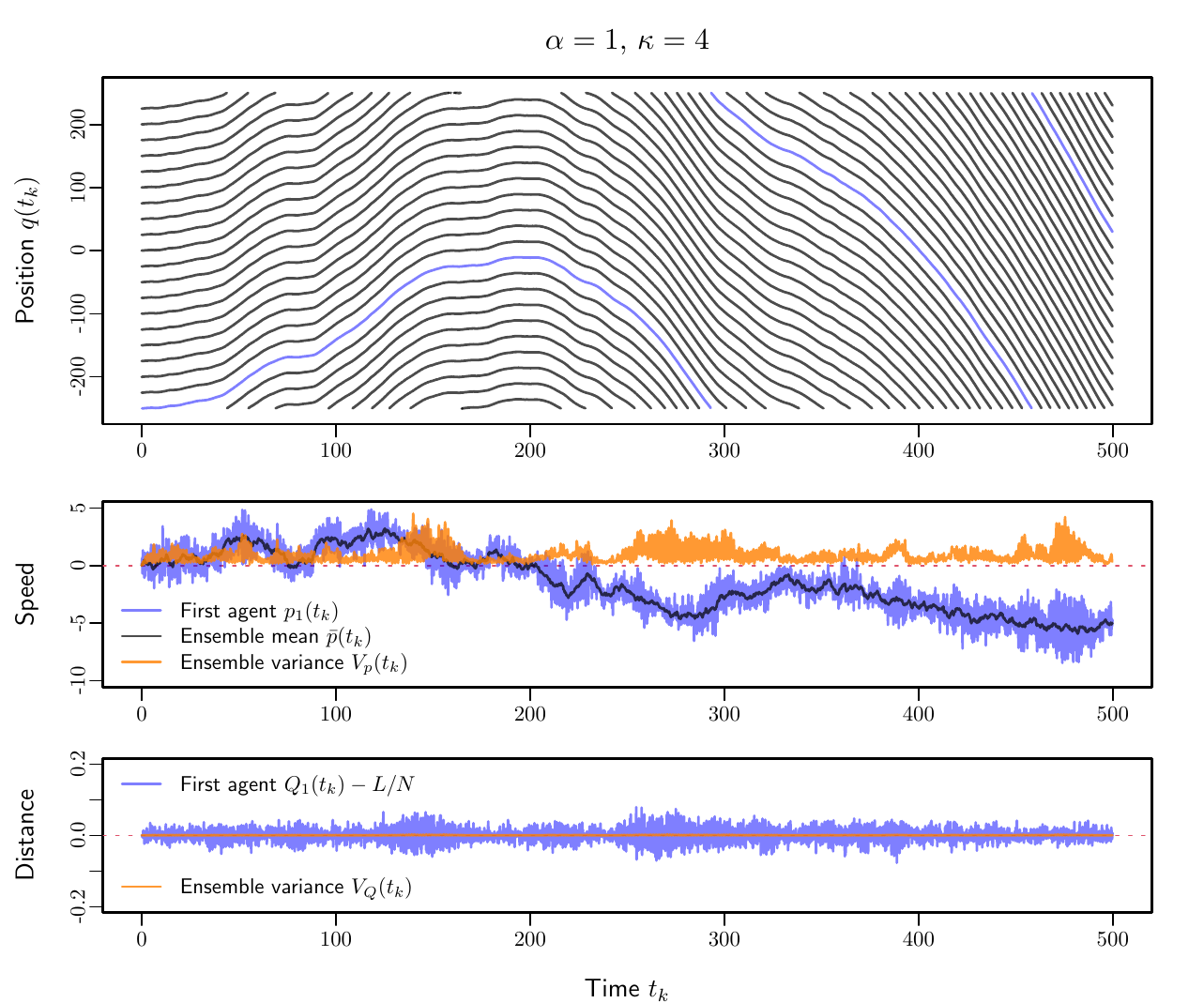}\medskip
    \caption{Third simulation scenario with $\alpha=1$ and $\kappa=4$ (non-linear model). Top panel: Agents' trajectories. Middle panel: Agents' velocity features. Bottom panel: Agents' distance features. The collective motion of the agents is more regular again in the case of the hard (non-linear) distance-based interaction term (compare with the first and second scenario Figures~\protect\ref{fig:1} and \protect\ref{fig:2}). The distance's variability is close to zero, and the agents move synchronously according to the Brownian motion of the mean velocity. Interestingly, even in the non-linear case, the velocity variability shows similar distribution characteristics to those obtained with the linear model.}
    \label{fig:3}
\end{figure}

\subsection{Autocorrelations of the Ensemble Variances}

The trajectories of the velocity and distance ensemble variances 
with $\alpha=0.01$ is more regular than the trajectories with $\alpha=1$ in the first and second linear scenarios for which $\kappa=2$. 
The trajectories with the nonlinear model with $\alpha=1$ and $\kappa=4$ are much less regular again (see Figure~\ref{fig:3}, middle and bottom panels). 
The Figure~\ref{fig:4} presents the empirical autocorrelation for the ensemble variances of the agents' velocities and distances for the three scenarios. 
The estimation are obtained based on simulation histories of $K_\text{max}=10\,000\,000$ iterations. 
A rich variety of dynamics can be observed. 
The first scenario with $\alpha=0.1$ present overdamped features for the autocorrelation function (gray curves in Figure~\ref{fig:4}). 
In fact, it holds for this scenario
\begin{equation*}
    \alpha^2<\frac12\beta^2(1-\cos(2\pi j/N)),\qquad j\in\{1,\ldots,N-1\},
\end{equation*}
($N=20$, $\beta=1$ and $\frac12\beta^2(1-\cos(2\pi j/N))\approx0.02$) and the system eigenvalues \eqref{eq:eigenvalue} are all purely real (see also Remark \ref{rem:disc_ev}), making the dynamics oscillation-free. 
The overdamped stability does not hold for $\alpha=1$ (second scenario). 
The eigenvalues are complex numbers and the velocity and distance ensemble variances describe damped oscillatory behaviors (blue curves in Figure~\ref{fig:4}). 
For the nonlinear model with $\kappa=4$, we observe extremely reduced oscillations (third scenario, orange curves in Figure~\ref{fig:4}). 
Indeed, the trajectories show deterministic features, even if they collectively fluctuate according to the Brownian motion of the mean velocity. 
This reduced oscillatory behavior explains why the ensemble variance trajectories present irregular characteristics (see Figure~\ref{fig:3}, bottom and middle panels).

\begin{figure}[!ht]
    \centering\medskip\medskip
    \includegraphics[width=\textwidth]{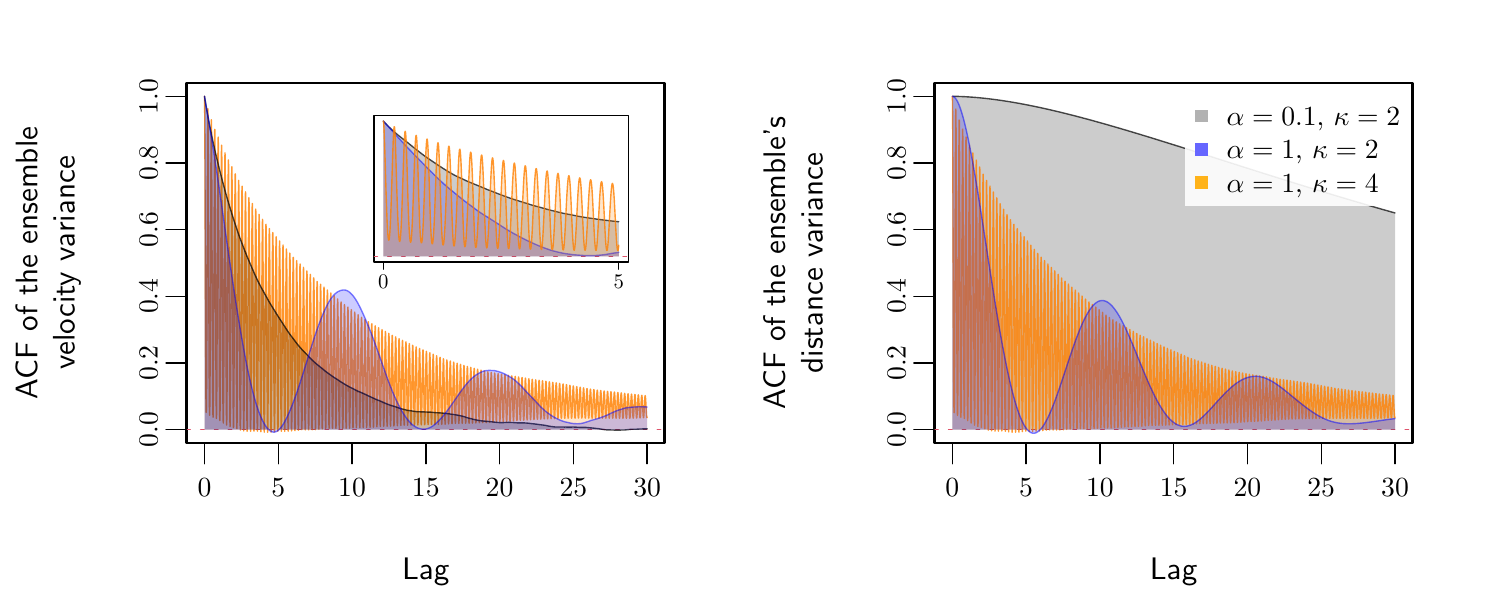}
    \caption{Autocorrelation functions (ACF) for the ensemble variances of velocities (left panel) and distances (right panel). The dynamics being overdamped in the first scenario where $\alpha=0.01$ and $\kappa=2$, the ACFs show no oscillation. The dynamics are solely damped when $\alpha=1$ and $\kappa=2$ and the ACF present oscillations with a period close to 10. For the nonlinear model with $\alpha=1$ and $\kappa=4$, the dynamics are  again much more oscillatory (period close to 0.2).}
    \label{fig:4}
\end{figure}

\subsection{Distributions of the Ensemble Variances}

In this section, we analyze the distribution of the ensemble variances of agents' velocities and distances for long simulation times. 
For the quadratic potentials $U$ with $\kappa=2$, the random variables $D(t_k)$
and $E(t_k)$ are normally distributed for every $k\in \N$. This implies that for every $k\in \N$ the ensemble variance of velocities $V_p(t_k)=\frac{1}{N}\|D(t_k)\|^2$ and the ensemble variance of distances $V_Q(t_k)=\frac{1}{N}\|E(t_k)\|^2$ follow generalized chi-squared distributions. For large $k\in \N$ the random variables $D(t_k)$ and $E(t_k)$ are asymptotically centered normal with covariance
\begin{equation}\label{eq:covTW}
    \Sigma_D(\infty)=\frac{\sigma^2}{2 \beta }K
    \qquad\text{and}\qquad
    \Sigma_E(\infty)=\frac{\sigma^2}{2\alpha^2 \beta }K, 
\end{equation}
respectively (see Theorem~\ref{thm:limitX}). Moreover, they are asymptotically independent. This implies that for large $k\in \N$ the ensemble variances $V_p(t_k)$ and $V_Q(t_k)$ have asymptotically independent chi-squared distributions whose parameters are determined through \eqref{eq:covTW}. 
Note that the distribution for the ensemble's velocity variance is independent of $\alpha$.

In particular, still in the case $\kappa=2$, the asymptotic expected ensemble variance of velocities is given by
\begin{equation*}
    \E\bigl(V_p(\infty)\bigr)=\frac{\sigma^2}{2N\beta }\sum_{n=1}^NK_{n,n}= \frac{\sigma^2(N^2-1)}{24N\beta}=0.83125
\end{equation*}
as $K_{n,n}=(N^2-1)/(12N)$ for all $n\in\{1,\ldots,N\}$, see \eqref{eq:K_final}, $N=20$, and $\beta=\sigma=1$. 
Similarly, the asymptotic expected ensemble variance of distances reads
\begin{equation*}
    \E\bigl(V_Q(\infty)\bigr)=\frac{\sigma^2(N^2-1)}{24N\alpha^2\beta}=\frac{0.83125}{\alpha^2}.
\end{equation*}

The trajectories of the ensemble variances of the agents' velocities and distances exhibit different characteristics. 
However, similar ranges of variation for the velocity variance appear for the three scenarios including the nonlinear model with $\kappa=4$ as well (third scenario), see  Figures~\ref{fig:1}, \ref{fig:2}, and \ref{fig:3}, middle panels.
We simulate the three scenarios over $K_{\text{max}}=200\,000\,000$ iterations and collect every $50\,000$ iterations the ensemble variances of agents' velocities and distances (samples of $4\,000$ observations). 
We estimate the theoretical chi-squared distribution using Monte Carlo simulation of random variables $\|C\mathcal U\|^2/N$, with $\mathcal U\colon \Omega \to \R^N$ a random vector of independent standard normal random variables and $C\in\R^{N\times N}$ the Cholesky decomposition of the covariance matrix $\Sigma_D(\infty)$ and $\Sigma_E(\infty)$, respectively, see \eqref{eq:covTW} (i.e., $CC^\top=\Sigma_D(\infty)$ for the chi-squared distribution of the ensemble's velocity variance while $CC^\top=\Sigma_E(\infty)$ for the ensemble's distance variance).
The histograms of simulated measurements and the theoretical chi-squared distributions are presented in Figure~\ref{fig:5}. 
Interestingly, it turns out that the chi-squared distribution of the agents' asymptotic ensemble's velocity variance also fits the histograms of the simulation of the nonlinear model with $\kappa=4$ (third scenario).  
This is surprising since the system is no longer an Ornstein-Uhlenbeck process if $\kappa=4$. 
Different distance-based interaction terms (e.g., with $\kappa=6$ or $U(x)=\exp(\alpha x)$) presented the same characteristic in further numerical experiments.

We have thus seen that the distribution of the agents' asymptotic ensemble's velocity variance does not display a dependence on $\alpha$ and $\kappa$. 
In contrast, the asymptotic distribution of the ensemble variance of the agents' distances is directly impacted by these parameters.
The ensemble variance of the agents' distances is asymptotically proportional to the inverse of the square of the parameter $\alpha$ in the linear case for which $\kappa=2$ (see Theorem \ref{thm:limitX} and Figure~\ref{fig:5}, bottom panels). 
The ensemble variance of the distances is thus on average a factor of 100 larger in the first scenario than in the second.
In the third scenario, where the distance-based interaction term is cubic, the distance variance is even more reduced and close to zero.
This is not surprising, since the non-linearity of the interaction term makes the model extremely sensitive to fluctuations in the distances to the nearest-neighbors. 
As a result, the distribution of the agents in space becomes increasingly uniform in the scenarios 1, 2, and 3 (see the trajectories in Figures~\ref{fig:1}, \ref{fig:2}, and \ref{fig:3}, top panels). 

\begin{figure}[!ht]
    \centering\medskip\medskip
    \includegraphics[width=\textwidth]{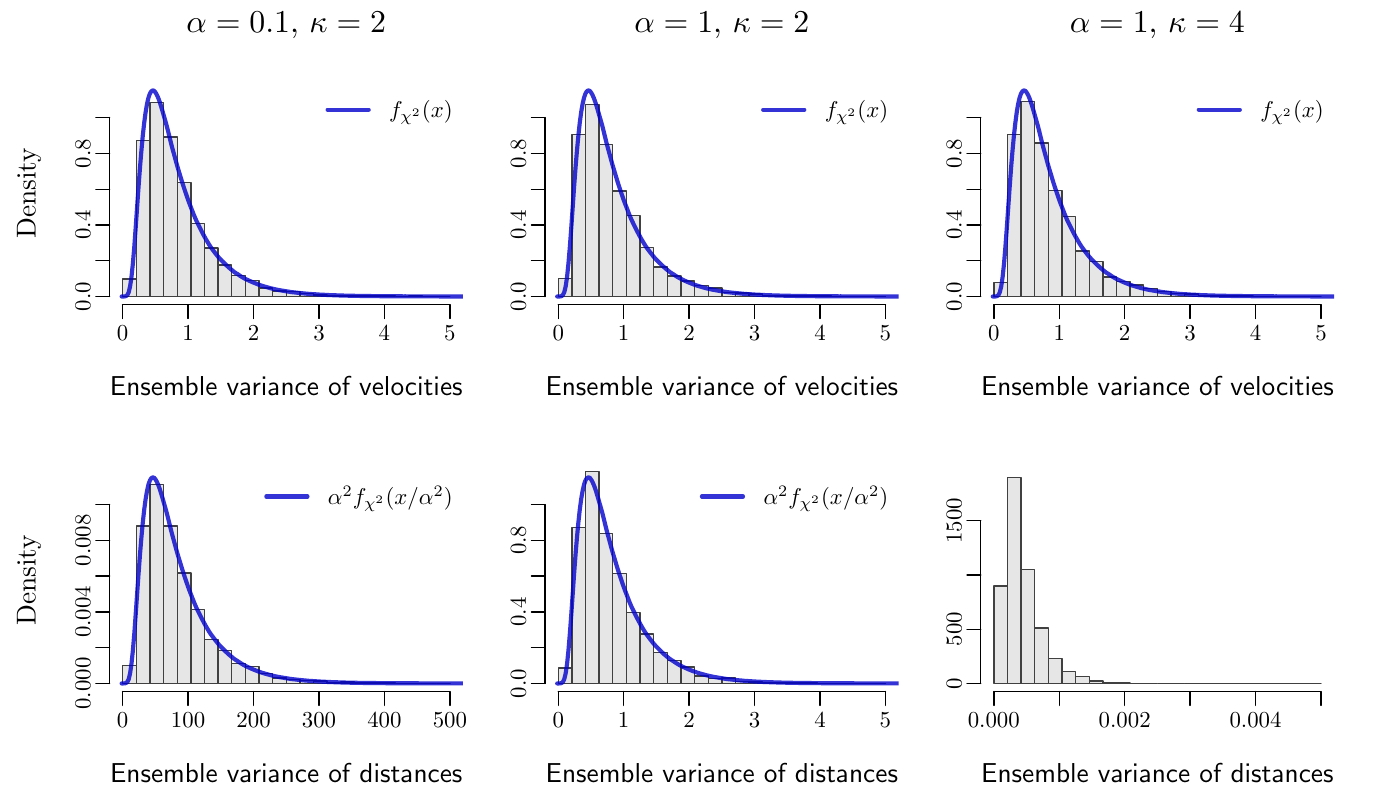}\medskip
    \caption{Histograms of the ensemble variance of the agents' velocities (top panels) and the ensemble variance of the agents' distances (bottom panels) obtained by simulation and the theoretical generalized chi-squared distributions for, from the left to the right, the first, second and third scenarios. As expected, the theoretical results match the simulation in the first and second scenario with $\kappa=2$ (left and middle panels). It is interesting to note that the ensemble variance of velocities for the nonlinear model also appears to follow a chi-squared distribution (third scenario with $\kappa=4$, top right panel).}
    \label{fig:5}
\end{figure}

\paragraph{Acknowledgments}
The authors thank Barbara R{\"u}diger, Claudia Totzeck and Baris Ugurcan for fruitful discussions on port-Hamiltonian interacting particle systems.

\bibliographystyle{plain} 
\bibliography{refs} 


\end{document}